\newcommand{\ZZ}{\mathbb{Z}}
\newcommand{\QQ}{\mathbb{Q}}
\newcommand{\CC}{\mathbb{C}}
\newcommand{\FF}{\mathbb{F}}
\newcommand{\Supp}{{\rm Supp}}
\newcommand{\sign}{{\rm sign}}
\newcommand{\St}{{\rm Fix}}
\newcommand{\Irr}{{\rm Irr}}
\newcommand{\bv}{{\bar{v}}}
\newcommand{\bc}{{\boldsymbol{c}}}
\newcommand{\blambda}{{\boldsymbol{\lambda}}}
\newcommand{\brh}{{\boldsymbol{rh}}}
\newcommand{\RHT}{RHT}
\newcommand{\bRHT}{{\boldsymbol{\RHT}}}
\newcommand{\bT}{{\boldsymbol{T}}}
\newcommand{\bP}{{\boldsymbol{P}}}
\newcommand{\bQ}{{\boldsymbol{Q}}}
\newcommand{\Inv}{\operatorname{Inv}}
\newcommand{\Pair}{{\operatorname{Pair}}}
\newcommand{\hgt}{\operatorname{ht}}
\newcommand{\Des}{{\rm{Des}}}
\newtheorem{theorem}{Theorem}[section]
\newtheorem{corollary}[theorem]{Corollary}
\newtheorem{proposition}[theorem]{Proposition}
\newtheorem{conjecture}[theorem]{Conjecture}
\newtheorem{lemma}[theorem]{Lemma}
\newtheorem{defn}[theorem]{Definition}
\newtheorem{remark}[theorem]{Remark}
\newtheorem{question}[theorem]{Question}
\newtheorem{observation}[theorem]{Observation}
\numberwithin{figure}{section}
\begin{document}
\title{A Gelfand Model for Wreath Products}
\bibliographystyle{acm}
\author{Ron M. Adin}
\address{Department of Mathematics, Bar-Ilan University,
Ramat-Gan 52900, Israel}
\email{radin@math.biu.ac.il}
\author{Alexander Postnikov}
\address{Department of Applied Mathematics, Massachusetts Institute of Technology,
Cambridge, MA 02139, USA}
\email{apost@math.mit.edu}
\author{ Yuval Roichman}
\address{Department of Mathematics, Bar-Ilan University,
Ramat-Gan 52900, Israel}
\email{yuvalr@math.biu.ac.il}

\date{February 11, 2008}

\maketitle

\begin{abstract}
A Gelafand model for wreath products $\ZZ_r\wr S_n$ is
constructed. The proof relies on a combinatorial interpretation of
the characters of the model, extending a classical result of
Frobenius and Schur.
\end{abstract}

\section{Introduction}\label{s.intro-section}

A complex representation of a group $G$ is called a {\em Gelfand
model} for $G$, or simply a {\em model},  if it is equivalent to
the multiplicity-free direct sum of all the irreducible
representations of $G$. The problem of constructing models was
introduced by Bernstein, Gelfand and Gelfand~\cite{BGG}.
Constructions of models for the symmetric group, using
representations induced from centralizers, were found by
Klyachko~\cite{K1, K2} and by Inglis, Richardson and
Saxl~\cite{Saxl}; see also~\cite{B, R, A, AA, AB}.
In this paper we determine an explicit and simple combinatorial
action which gives a model for wreath products $\ZZ_r\wr S_n$, and
in particular for the Weyl groups of type $B$. For $r=1$ (i.e.,
for the symmetric group) the construction is identical with the
one given
in~\cite{Verma, APR}. 
The proof relies on a combinatorial interpretation of the
characters, extending a classical result of Frobenius and Schur.

\medskip
If all the (irreducible) representations of a finite group are
real then, by a result of Frobenius and Schur, the character-value
of a model at a group element is the number of square roots of
this element in the group. We are concerned in this paper with
$G(r,n) = \ZZ_r \wr S_n$, the wreath product of a cyclic group
$\ZZ_r$ with a symmetric group $S_n$. For $r > 2$ this group is
not real, and Frobenius' theorem does not apply. It will be shown
that the character-value of a model at an element of $G(r,n)$ is
the number of ``absolute square roots'' of this element in the
group; see Theorem~\ref{t.main} below.

\medskip

The rest of the paper is organized as follows.
The construction of the model is described in Subsection~\ref{s.main-results}.
Necessary preliminaries and notation are given in Section~\ref{s.preliminaries}.
The combinatorial interpretation of the characters of the model is
described in Section~\ref{s.formula} (Theorem~\ref{t.main}). Two
proofs for this interpretation are given. A direct combinatorial
proof, using the Murnaghan-Nakayama rule, is given in
Section~\ref{s.direct-proof}. The second proof combines the
properties of the generalized Robinson-Schensted algorithm for
wreath products, due to Stanton and White, with a generalized
Frobenius-Schur formula due to Bump and Ginzburg; see
Section~\ref{s.second-proof}. The main theorem
(Theorem~\ref{t.model}) is proved in
Section~\ref{s.proof-of-model}. The proof applies generalized
Frobenius-Schur character formula (Theorem~\ref{t.main}) together
with Corollary~\ref{t.root}. Section~\ref{s.final-section} ends
the paper with final remarks and open problems.


\subsection{Main Result}\label{s.main-results}


\begin{defn}
Consider the natural representation $\varphi: \ZZ_r\wr S_n \to
GL_n(\CC)$. An element $\pi\in \ZZ_r\wr S_n$ is called {\em
symmetric} if $\varphi(\pi)$ is a (complex) symmetric matrix, that is
$\varphi(\pi)^t = \varphi(\pi)$.
\end{defn}

Denote by $I_{r,n}$ - the set of symmetric elements in $\ZZ_r\wr
S_n$, and let $V_{r,n}:={\rm{span}}_\QQ\{C_v\ |\ v\in I_{r,n}\}$
be a vector space over $\QQ$ with a basis indexed by the symmetric
elements.

Recall that that each element $v\in \ZZ_r\wr S_n$ may be
represented by a pair $(\sigma,z)$ where $\sigma\in S_n$ and $z\in
\ZZ_r^n$. Denote $|v|:=\sigma$ and $\omega:=e^{2\pi i/ r}$. Let
$S$ be the standard generating set of simple complex reflections
in $\ZZ_r\wr S_n$; namely, $S=\{s_0,s_1,\dots,s_{n-1}\}$ where
$s_0=([1,2,\dots,n],(1,0,\dots,0))$ and
$s_i=([1,2,\dots,i-1,i+1,i,i+2,\dots,n],(0,\dots,0))$ ($i>0$).
Note that for $r>2$ $s_0$ is not an involution.

\medskip

Define a map $\rho: S \to GL(V_{r,n})$ by
\begin{equation}\label{e.linear_action}
\rho(s_i)C_v := {\rm{sign}}(i;\ v)\cdot C_{s_ivs_i}\qquad (0\le i\le n-1, v\in I_{r,n}),
\end{equation}
where
$$ 
{\rm{sign}}(i;\ v):=
\begin{cases}
   -1,& \text{ if } s_i v s_i=v \text{ and } s_i\in \Des(|v|);\\
   1, & \text{ otherwise }
\end{cases}
\qquad(\forall i > 0)
$$ 
(namely, ${\rm sign}(i,\ v) = -1$ iff $|v|\in S_n$ permutes $i$ and $i+1$),
while
$$ 
{\rm{sign}}(0;\ v):=
\begin{cases}
   -1,& \text{ if }  v(1)= 1\cdot \omega^{-1} \text{ and } r \text{ is even};\\
   1, & \text{ otherwise,}
\end{cases}
$$ 
In particular, if $r$ is odd then $\sign(0; v)$ is always $1$.

\medskip

Using a generalized Frobenius-Schur character formula
(Theorem~\ref{t.main}) together with Corollary~\ref{t.root} we
prove

\begin{theorem}\label{t.model}
$\rho$ extends to a Gelfand model for $\ZZ_r\wr S_n$.
\end{theorem}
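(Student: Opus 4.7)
The plan is to establish Theorem~\ref{t.model} in three stages: (i)~check that the formulas~\eqref{e.linear_action} extend from the generating set $S$ to a well-defined linear representation of $G(r,n) = \ZZ_r \wr S_n$; (ii)~compute the character $\chi_\rho$ of this representation as a signed fixed-point count on the basis $\{C_v : v \in I_{r,n}\}$; and (iii)~use Theorem~\ref{t.main} together with Corollary~\ref{t.root} to identify $\chi_\rho(g)$ with $\sum_{\chi \in \Irr(G(r,n))} \chi(g)$ for every $g \in G(r,n)$. Since a representation is determined up to equivalence by its character, this identification says that $\rho$ is equivalent to the multiplicity-free sum of all irreducibles, which is a Gelfand model by definition.

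For stage~(i), I would verify the defining relations of $G(r,1,n) = \ZZ_r \wr S_n$ as a complex reflection group: the orders $s_0^r = 1$ and $s_i^2 = 1$ for $i \ge 1$; the type-$B$ braid relation $s_0 s_1 s_0 s_1 = s_1 s_0 s_1 s_0$; the ordinary braid relations $s_i s_{i+1} s_i = s_{i+1} s_i s_{i+1}$ for $i \ge 1$; and the far commutations $s_i s_j = s_j s_i$ for $|i-j| \ge 2$. A key observation is that the ``sandwich'' operation $v \mapsto s_i v s_i$ preserves $I_{r,n}$: since $\varphi(s_i)$ is itself complex symmetric, the matrix $\varphi(s_i) \varphi(v) \varphi(s_i)$ equals its own transpose whenever $\varphi(v)^t = \varphi(v)$. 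Moreover, all of the relations above are invariant under word reversal, so iterated sandwiching along any word for $g$ produces a well-defined element $g v g^{\mathrm{op}}$ depending only on $g$. What remains is the cocycle-like consistency of the scalar signs $\sign(i;v)$ along each relation; this reduces to a short finite case analysis tracking how sandwiching by $s_i$ alters the descent set of $|v|$ and the entry $v(1)$ relative to $\omega^{-1}$.

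For stage~(ii), the signs accumulated along a word for $g$ give $\rho(g) C_v = \varepsilon(g,v) C_{g v g^{\mathrm{op}}}$ for a sign $\varepsilon(g,v) \in \{\pm 1\}$, so the trace is $\chi_\rho(g) = \sum \varepsilon(g,v)$, the sum ranging over those $v \in I_{r,n}$ with $g v g^{\mathrm{op}} = v$. Stage~(iii) is the matching of this signed fixed-point count with the ``absolute square root'' enumeration of Theorem~\ref{t.main}: Corollary~\ref{t.root} is expected to provide the bijective or numerical translation between absolute square roots of $g$ and sign-weighted fixed $v$'s, yielding $\chi_\rho = \sum_{\chi \in \Irr(G(r,n))} \chi$ and hence the theorem.

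The main obstacle lies in stage~(i), specifically the braid relation $s_0 s_1 s_0 s_1 = s_1 s_0 s_1 s_0$: because $s_0$ is not an involution when $r > 2$, sandwiching by $s_0$ does not coincide with ordinary group conjugation, so no uniform ``signs on a Coxeter graph'' machinery is available. The cocycle compatibility between the twist at $s_0$ (depending on the parity of $r$ and on whether $v(1) = \omega^{-1}$) and the descent-based sign at $s_1$ has to be checked in every configuration of $v \in I_{r,n}$ arising along the relation. Once stage~(i) is settled, stages~(ii) and~(iii) follow cleanly from the character formula of Theorem~\ref{t.main} and the root-counting identity of Corollary~\ref{t.root}.
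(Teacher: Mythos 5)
Your three-stage architecture matches the paper's (extend $\rho$ to a homomorphism, compute its character as a signed fixed-point count, match against Theorem~\ref{t.main} and Corollary~\ref{t.root}), but the two places where the real work lives are exactly the two places you defer. In stage~(i) you propose to extend $\rho$ along words in $S$ and verify sign-consistency relation by relation; the paper instead writes down a \emph{closed formula} for the sign of an arbitrary group element acting on an arbitrary basis vector, namely $\sign_o(\pi,w)=(-1)^{\#(\Inv(|\pi|)\cap\Pair(|w|))}$ for odd $r$, multiplied by an extra factor $(-1)^{\#B(\pi,w)}$ (recording the fixed points of $|w|$ with odd color whose ``half-color'' is pushed across $r/2$ by $\pi$) when $r$ is even, and proves the cocycle identity $\sign(\pi_2\pi_1,w)=\sign(\pi_1,w)\cdot\sign(\pi_2,\pi_1 w\pi_1^t)$ directly. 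This is not merely a stylistic choice: without such a formula you have no handle on $\varepsilon(g,v)$ for a general $g$, so your stage~(iii) cannot even be started, since computing $\chi_\rho(g)=\sum_v \varepsilon(g,v)$ requires knowing the signs on all fixed basis vectors of a general element, not just of generators. Note also that your description of the twist at $s_0$ as depending on ``whether $v(1)=\omega^{-1}$'' understates the even-$r$ case: the correct global sign involves all odd-colored fixed points of $|w|$, not only position~$1$.

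More seriously, stage~(iii) is asserted rather than proved. Corollary~\ref{t.root} gives a product formula for $\#\{v:v\bar v=g\}$; it does not by itself translate that number into the signed fixed-point count. The paper's Part~2 does this by reducing (via multiplicativity over cycle lengths and the class-function property) to $\pi$ of cycle type $d^m$, classifying the $w\in\St(\pi)$ into three cases according to how $|w|$ matches the cycles of $|\pi|$, computing the sign in each case --- it is $-1$ precisely when a cycle of even length $d=2e$ is matched to itself by the antipodal involution --- and then constructing a sign-reversing involution that cancels these negative contributions against the fixed-point-free-on-that-cycle contributions, leaving exactly $\sum_P (dr)^{n_2(P)}r^{n_1(P)}$ as in Corollary~\ref{t.root}. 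For even $r$ a further cancellation is required (cycles with $z_\pi(c)=r/2$ admit Case-(1) stabilizing $w$'s but must contribute zero in total), and this is precisely what the extra factor in $\sign_e$ is designed to produce. None of this cancellation structure appears in your outline, and it is the heart of the proof: the signed and unsigned fixed-point counts genuinely differ, so no direct ``bijective or numerical translation'' exists without the sign-reversing involution.
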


\begin{remark}\ \rm{
Note that for $r=2$ (i.e., for the Weyl group of type $B$)
$$
{\rm{sign}}(0;\ v):=
\begin{cases}
   -1,   & \text{ if } s_0vs_0=v \text { and } s_0\in \Des(v);\\
   1, & \text{ otherwise. }
\end{cases}
$$
Thus Theorem~\ref{t.model} implies~\cite[Theorem 5.1.1]{APR},
which was stated there without proof.}
\end{remark}

\section{Preliminaries and Notation}\label{s.preliminaries}

Let $S_n$ be the symmetric group on $n$ letters, $\ZZ_r$ the
cyclic group of order $r$ (realized as, the additive group
of integers modulo $r$), and $G(r,n) = \ZZ_r \wr S_n$ their wreath
product:
$$
G(r,n) := \{g = (\sigma, (c_1,\ldots,c_n))\,|\,\sigma\in S_n,\,c_i\in\ZZ_r\,(\forall i)\}
$$
with the group operation
$$
(\sigma, (c_1,\ldots,c_n)) \cdot (\tau, (d_1,\ldots,d_n)) :=
(\sigma\tau, (c_{\tau^{-1}(1)} + d_1, \ldots, c_{\tau^{-1}(n)} + d_n)).
$$

\bigskip

The Murnaghan-Nakayama rule is an explicit formula for the character values
of irreducible representations of $S_n$ (and of $G(r,n)$). We shall first
describe the formula for $S_n$ (for later use in our proofs), and
then give its generalization to $G(r,n)$.

A {\em rim hook tableau} of shape $\lambda$ is a sequence
$$
\emptyset = \lambda^{(0)} \subset \ldots \subset \lambda^{(t)} = \lambda
$$
of Young diagrams such that each consecutive difference
$rh_i := \lambda^{(i)} \setminus \lambda^{(i-1)}$ ($1\le i\le t$)
is a non-empty {\em rim hook} (or {\em border strip}), namely
a connected skew diagram ``of width 1''
(i.e., containing no $2\times 2$ square).
The sequence can be described by one tableau $T$ of shape $\lambda$
in which the cells of each rim hook $rh_i$ are marked $i$.
The {\em length} $l(rh_i)$ of a rim hook $rh_i$ is
the number of cells it contains;
its {\em height} $\hgt(rh_i)$ is the height difference between
its two extreme cells.

\begin{proposition}\label{t.murnaghan_sn}{\rm (Murnaghan-Nakayama rule for $S_n$)}
Fix an ordering $\bc = (c_1,\ldots,c_m)$ of the disjoint cycles of
a permutation $\sigma\in S_n$, and let $l(c_i)$ be the length of $c_i$.
For any partition $\lambda$ of $n$ let $\chi^\lambda$ be the corresponding
irreducible character of $S_n$. Then
$$
\chi^\lambda(\sigma) = \sum_{T\in \RHT_\bc(\lambda)} \prod_{i=1}^{m} (-1)^{\hgt(rh_i)},
$$
where $\RHT_\bc(\lambda)$ is the set of all rim hook tableaux of shape $\lambda$ with
$l(rh_i) = l(c_i)$ ($\forall i$).
\end{proposition}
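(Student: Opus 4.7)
The plan is to derive the formula from the symmetric-function theory of $S_n$-characters via the Frobenius identity
$$p_\mu = \sum_{\lambda \vdash n} \chi^\lambda(\sigma) \, s_\lambda,$$
where $\mu = (l(c_1), \dots, l(c_m))$ is the cycle type of $\sigma$ and $s_\lambda$ denotes the Schur function. This reduces the statement to expanding the product $p_\mu = p_{l(c_1)} \cdots p_{l(c_m)}$ in the Schur basis and reading off the coefficient of $s_\lambda$.

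I would carry out this expansion by multiplying one power-sum factor at a time, invoking the rim-hook Pieri rule
$$p_k \cdot s_\nu = \sum_{\rho} (-1)^{\hgt(\rho/\nu)} \, s_\rho,$$
where $\rho$ ranges over partitions obtained from $\nu$ by adding a single rim hook of size $k$. Iterating this identity $m$ times, starting from $s_\emptyset = 1$ and using the factors in the fixed order prescribed by $\bc$, the Schur-basis monomials that appear are indexed precisely by sequences $\emptyset = \lambda^{(0)} \subset \lambda^{(1)} \subset \cdots \subset \lambda^{(m)}$ whose successive differences are rim hooks of the prescribed sizes $l(c_1), \dots, l(c_m)$; these are exactly the rim hook tableaux in $\RHT_\bc(\lambda^{(m)})$. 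Each such tableau contributes the sign $\prod_{i=1}^m (-1)^{\hgt(rh_i)}$, and extracting the coefficient of $s_\lambda$ yields the claimed formula.

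The main obstacle is proving the rim-hook Pieri identity itself. My preferred route is to apply the Jacobi--Trudi determinantal formula $s_\nu = \det\bigl(h_{\nu_i - i + j}\bigr)$ together with Newton's identity expressing $p_k$ as an alternating telescoping combination of products $h_{k-j} e_j$. Multiplying the Jacobi--Trudi determinant by $p_k$ row by row and collapsing the resulting alternating sum of sub-determinants through column operations produces exactly the Schur functions indexed by partitions obtained from $\nu$ by adding a single $k$-rim-hook, with the height sign emerging naturally from the number of row swaps needed to restore partition order. Once this key identity is in hand, the iteration described above and the identification of its chains with elements of $\RHT_\bc(\lambda)$ are essentially definitional, so the combinatorial content of the theorem is concentrated in the Pieri step.
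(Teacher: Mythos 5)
The paper offers no proof of this proposition: it is the classical Murnaghan--Nakayama rule, stated as a known tool (and used later in Lemma~\ref{t.chi}), so there is nothing internal to compare against. Your argument is the standard symmetric-function proof --- push the statement through the Frobenius characteristic identity $p_\mu=\sum_\lambda \chi^\lambda(\sigma)s_\lambda$ and iterate the rim-hook Pieri rule $p_k s_\nu=\sum_\rho(-1)^{\hgt(\rho/\nu)}s_\rho$ --- and the reduction of the theorem to that single identity is correct; in particular the fixed ordering $\bc$ causes no trouble since the $p_{l(c_i)}$ commute and the chains $\emptyset=\lambda^{(0)}\subset\cdots\subset\lambda^{(m)}=\lambda$ produced by the iteration are by definition the elements of $\RHT_\bc(\lambda)$. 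The one genuine imprecision is in your sketch of the Pieri step: the identity you invoke, ``$p_k$ as an alternating telescoping combination of products $h_{k-j}e_j$,'' is false as literally stated, since $\sum_{j=0}^{k-1}(-1)^j h_{k-j}e_j=(-1)^{k-1}e_k\neq p_k$ (already for $k=2$: $h_2-h_1e_1=-e_2$). The correct versions are Newton's identity $p_k=\sum_{j=0}^{k-1}(-1)^j(k-j)\,h_{k-j}e_j$, whose coefficients $k-j$ do not disappear, or the hook expansion $p_k=\sum_{j=0}^{k-1}(-1)^j s_{(k-j,1^j)}$; either can be fed into a Jacobi--Trudi manipulation, but the bookkeeping is heavier than you suggest. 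The cleaner classical route to the Pieri step is the bialternant formula $s_\nu=a_{\nu+\delta}/a_\delta$: one has $p_k\,a_{\nu+\delta}=\sum_i a_{\nu+\delta+k\epsilon_i}$, each nonzero term sorts to $\pm a_{\rho+\delta}$ with $\rho/\nu$ a $k$-rim-hook, and the number of transpositions needed to sort is exactly $\hgt(\rho/\nu)$. With that substitution (or with the corrected form of Newton's identity carried through carefully) your proof is complete.
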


In order to describe the characters of $G(r,n)$ let us recall the notions of
$r$-partite partitions and tableaux.
An {\em $r$-partite partition of $n$} is an $r$-tuple
$\blambda = (\lambda_0,\dots,\lambda_{r-1})$ such that
each $\lambda_i$ is a partition of a nonnegative integer $n_i$
and $n_0+\ldots+n_{r-1}=n$.
(We shall usually use \textbf{boldface} to denote $r$-partite concepts.)
An {\em $r$-partite standard Young tableau} of shape $\blambda$ is obtained by
inserting the integers $1,\ldots,n$ bijectively into the cells of the
corresponding diagrams such that entries increase along
each row and column of each diagram.
An {\em $r$-partite rim hook tableau} of shape $\blambda$ is a sequence
$$
\emptyset = \blambda^{(0)} \subset \ldots \subset \blambda^{(t)} = \blambda
$$
of $r$-partite partitions (diagrams) such that each consecutive difference
$\brh_i := \blambda^{(i)} \setminus \blambda^{(i-1)}$ ($1\le i\le t$),
as an $r$-tuple of skew shapes, has $r-1$ empty parts and one non-empty part
which is a rim hook $rh_i$:
$\brh_i = (\ldots,\emptyset,rh_i,\emptyset,\ldots)$.
Let $f(i)\in[0,r-1]$ be the index of the non-empty part of $\brh_i$.
Again, an $r$-partite rim hook tableau can be described by
an $r$-partite tableau in which the cells of each rim hook $rh_i$
are marked $i$ ($1\le i\le t$).

The conjugacy classes of $G(r,n)$ are described by the cycle structure
of the underlying permutations in $S_n$, sub-classified by
the sum of colors (in $\ZZ_r$) in each cycle.
These correspond to $r$-partite partitions.
The irreducible representations of $G(r,n)$ may be indexed by the same
combinatorial objects.
A construction of the irreducible representation $S^\blambda$ indexed by
each $r$-partite partition $\blambda$ was given by Specht in the thirties.

The dimension of $S^\blambda$ is equal to the number of $r$-partite
standard Young tableaux of shape $\blambda$.
It follows that the number of pairs of $r$-partite standard Young tableaux
of the same shape is equal to the cardinality of $G(r,n)$.
A bijective proof was given by Stanton and White~\cite{SW},
using a generalized Robinson-Schensted algorithm;
see Section~\ref{s.second-proof}.

\begin{proposition}\label{t.murnaghan_grn}{\rm (Murnaghan-Nakayama rule for $\ZZ_r\wr S_n$)}
Fix an ordering $\bc = (c_1,\ldots,c_m)$ of the disjoint cycles of
a colored permutation $g\in \ZZ_r\wr S_n$. Let $l(c_i)$ be the
length of $c_i$ and let $z(c_i)\in \ZZ_r$ be its color (the sum of
colors of its elements). For any $r$-partite partition $\blambda$
of $n$, let $\chi^\blambda$ be the corresponding irreducible
character of $\ZZ_r\wr S_n$. Then
$$
\chi^\blambda(g) = \sum_{\bT\in \bRHT_\bc(\blambda)}
                   \prod_{i=1}^{m} (-1)^{\hgt(rh_i)} \omega^{f(i) \cdot z(c_i)},
$$
where $\bRHT_\bc(\blambda)$ is the set of all $r$-partite rim hook tableaux
of shape $\blambda$ such that $l(rh_i) = l(c_i)$ ($\forall i$);
$f(i)\in [0,r-1]$ is the index of the nonempty part $rh_i$ of $\brh_i$, as above;
and $\omega := e^{2\pi i/r}$.
\end{proposition}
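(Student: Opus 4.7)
The plan is to reduce the statement to the Murnaghan--Nakayama rule for $S_n$ (Proposition~\ref{t.murnaghan_sn}) using the standard induced description of the irreducibles $S^\blambda$. Recall that for $\blambda = (\lambda^{(0)},\ldots,\lambda^{(r-1)})$ with $|\lambda^{(j)}|=n_j$ and $\sum_j n_j = n$, one has
$$
S^\blambda \;\cong\; \mathrm{Ind}_{G(r,n_0)\times\cdots\times G(r,n_{r-1})}^{G(r,n)} \Bigl(\bigotimes_{j=0}^{r-1} \bigl(\widetilde{S^{\lambda^{(j)}}} \otimes \psi_j\bigr)\Bigr),
$$
where $\widetilde{S^{\lambda^{(j)}}}$ is the inflation to $G(r,n_j)$ of the $S_{n_j}$-irreducible $S^{\lambda^{(j)}}$ through the projection $G(r,n_j) \to S_{n_j}$, and $\psi_j$ is the linear character $(\sigma, z) \mapsto \omega^{j(z_1+\cdots+z_{n_j})}$. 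This description can be read off from Specht's construction of $S^\blambda$.

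Next I apply Frobenius' induced-character formula to this expression at $g\in G(r,n)$. Since each cycle $c_i$ of $g$ has a well-defined length $l(c_i)$ and total color $z(c_i)\in\ZZ_r$, and since a conjugate of $g$ lies in the Young-type subgroup $\prod_j G(r,n_j)$ iff every cycle is carried into a single block, the sum over conjugates reorganizes as a sum over assignments $f: \{1,\ldots,m\}\to\{0,\ldots,r-1\}$ of cycles to blocks satisfying $\sum_{i:\,f(i)=j}l(c_i)=n_j$. For such an $f$, the inflated factor $\widetilde{S^{\lambda^{(j)}}}$ contributes $\chi^{\lambda^{(j)}}$ evaluated at the subpermutation formed by the cycles $\{c_i : f(i)=j\}$, while $\psi_j$ contributes $\prod_{i:\,f(i)=j}\omega^{j\cdot z(c_i)}$.

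In the third step I apply Proposition~\ref{t.murnaghan_sn} to each $\chi^{\lambda^{(j)}}$ in the resulting sum. This produces an ordinary rim hook tableau of shape $\lambda^{(j)}$ whose hooks are labeled by $\{i : f(i)=j\}$ in the order inherited from $\bc$, with sign $(-1)^{\hgt(rh_i)}$ for each such hook. Assembling these $r$ ordinary tableaux into the $j$-th coordinate of an $r$-partite diagram recovers exactly an element $\bT\in\bRHT_\bc(\blambda)$, together with the map $f$, which is precisely the index of the non-empty coordinate of $\brh_i$ in $\bT$. The sign $(-1)^{\hgt(rh_i)}$ from the $S_n$ rule and the color weight $\omega^{f(i)\cdot z(c_i)}$ from $\psi_{f(i)}$ multiply to give exactly the weight of $\bT$ in the claimed formula.

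The main obstacle is the bookkeeping in the second step: one must verify carefully that only cycle-respecting conjugates contribute to the Frobenius sum (using that colors are well-defined on cycles, not on individual letters), and that the centralizer normalization in the induced-character formula matches the count of block-respecting conjugates so that the sum reduces cleanly to a sum over the maps $f$ with no extra multiplicity. Once these structural points are settled, matching the weights is routine.
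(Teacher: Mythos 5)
Your argument is correct, and it is the standard proof of this fact; the paper itself states Proposition~\ref{t.murnaghan_grn} as known background in the preliminaries and gives no proof, so there is nothing to compare against. The one point you flag as the main obstacle does work out exactly as you hope: writing the induced character as a sum over $H$-classes $[h]_H$ fusing into the class of $g$ (with $H=\prod_j G(r,n_j)$), each class is weighted by $|C_G(g)|/|C_H(h)|$, and since the centralizer of a colored $d$-cycle has order $dr$, this ratio is precisely the multinomial coefficient counting the assignments $f$ that realize that class; hence the sum collapses to a multiplicity-free sum over functions $f$ with $\sum_{i:\,f(i)=j} l(c_i)=n_j$, which is also exactly the constraint forced on an element of $\bRHT_\bc(\blambda)$ by $l(rh_i)=l(c_i)$. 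The remaining assembly of the $r$ ordinary rim hook tableaux into an $r$-partite one, with $f$ recording the non-empty coordinate of each $\brh_i$, is the bijection you describe, and the weights match term by term.
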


\section{A Character Formula}\label{s.formula}

Let $\Irr(G)$ be the set of irreducible complex characters of a
finite group $G$. A classical result of Frobenius and Schur~\cite{FS}
(see, e.g.,~\cite{Is}) is
\begin{proposition}{\rm (Frobenius-Schur)}\label{t.frobenius}
For any finite group $G$ and any $g\in G$,
$$
\#\{v\in G\,|\,v^2 = g\} = \sum_{\chi\in\Irr(G)} \epsilon(\chi)\chi(g),
$$
where
$$
\epsilon(\chi) := \begin{cases}
1,&\hbox{if $\chi$ is afforded by a real representation;}\\
-1,&\hbox{if $\chi$ is real-valued, but is not afforded by a real representation;}\\
0,&\hbox{if $\chi$ is not real-valued.}
\end{cases}
$$
\end{proposition}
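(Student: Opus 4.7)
The plan is to follow the classical orthogonality strategy: expand the function $\theta(g):=\#\{v\in G:v^2=g\}$ in irreducible characters and match the resulting coefficients with $\epsilon(\chi)$. Since conjugation by $h\in G$ bijects $\{v:v^2=g\}$ with $\{v:v^2=hgh^{-1}\}$, $\theta$ is a class function, so by orthonormality of $\Irr(G)$,
$$
\theta \;=\; \sum_{\chi\in\Irr(G)} a_\chi\,\chi, \qquad a_\chi \;=\; \langle\theta,\chi\rangle \;=\; \frac{1}{|G|}\sum_{v\in G}\overline{\chi(v^2)}.
$$
Writing $\nu_2(\chi):=\frac{1}{|G|}\sum_{v\in G}\chi(v^2)$ for the classical Frobenius--Schur indicator, the claim reduces to the identity $\nu_2(\chi)=\epsilon(\chi)$; once this is shown (and is in particular real-valued), we get $a_\chi=\overline{\nu_2(\chi)}=\epsilon(\chi)$, as required.

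Next, I would reinterpret $\nu_2(\chi)$ representation-theoretically. Let $V$ afford $\chi$; the decomposition $V\otimes V=\operatorname{Sym}^2 V\oplus\bigwedge^{\!2} V$ has characters $\tfrac12(\chi(g)^2\pm\chi(g^2))$ on the two summands, so pairing with the trivial character yields
$$
\nu_2(\chi) \;=\; \dim(\operatorname{Sym}^2 V)^G \;-\; \dim(\bigwedge\nolimits^{\!2} V)^G.
$$
By Schur's lemma, $\dim(V\otimes V)^G=\dim\operatorname{Hom}_G(V^{*},V)$ equals $1$ when $V\cong V^{*}$ (equivalently $\chi=\overline\chi$) and $0$ otherwise. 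Combined with the previous identity this forces $\nu_2(\chi)\in\{-1,0,+1\}$, with value $0$ precisely when $\chi$ is not real-valued.

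It remains to distinguish $\nu_2(\chi)=+1$ from $\nu_2(\chi)=-1$ in the real-valued case and to tie the two possibilities to the existence of a real realization of $\chi$; this is the main obstacle. The idea is to combine the (essentially unique) nonzero $G$-invariant bilinear form $B$ on $V$ with a $G$-invariant Hermitian inner product $h$ (available by the unitarian trick) to define a conjugate-linear, $G$-equivariant map $\sigma\colon V\to V$ via $h(\sigma x,y)=B(x,y)$. Then $\sigma^2$ is $\CC$-linear and $G$-equivariant, hence a scalar $c$ by Schur; a positivity argument using $h$ shows $c>0$ when $B$ is symmetric and $c<0$ when $B$ is antisymmetric. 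After rescaling $\sigma$ one obtains $\sigma^2=+1$ in the symmetric case, whence the real subspace $V^\sigma$ is a $G$-stable real form of $V$ affording $\chi$; in the antisymmetric case $\sigma^2=-1$ rules out any real form, since on a real subspace $\sigma$ would be a genuine involution. The delicate interplay between the invariant bilinear form, the Hermitian product, and the conjugate-linear structure is the technical heart of the proof.
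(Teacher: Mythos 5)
The paper offers no proof of this proposition: it is quoted as the classical theorem of Frobenius and Schur, with references to~\cite{FS} and to Isaacs's book~\cite{Is}, and is then used as an ingredient (for the symmetric groups $S_{n_j}$) in the proof of Theorem~\ref{t.main}. So there is nothing internal to compare your argument against, only the standard literature --- and your argument is precisely the standard one: expand the class function $g\mapsto\#\{v\in G: v^2=g\}$ in irreducible characters, identify the coefficient of $\chi$ as the indicator $\nu_2(\chi)=\frac{1}{|G|}\sum_{v}\chi(v^2)$, compute $\nu_2(\chi)=\dim(\operatorname{Sym}^2V)^G-\dim(\bigwedge^{2}V)^G$, bound the sum of these two dimensions by $\langle\chi,\overline{\chi}\rangle\le 1$, and in the real-valued case decide the sign by turning the invariant bilinear form $B$ into a conjugate-linear equivariant map $\sigma$ with $\sigma^2=\pm 1$. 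Your outline is correct. Two small points deserve a sentence each in a full write-up: (i) you should note that $B$ is nondegenerate (its radical is a subrepresentation of the irreducible $V$, hence zero), so that $\sigma$ is well defined and invertible; and (ii) the assertion that $\sigma^2=-1$ ``rules out any real form'' is best justified by observing that a real form would furnish a second conjugate-linear equivariant map $\tau$ with $\tau^2=+1$, whence $\sigma=\lambda\tau$ for some scalar $\lambda$ by Schur's lemma and $\sigma^2=|\lambda|^2\tau^2>0$, a contradiction --- equivalently, a real form produces an invariant \emph{symmetric} bilinear form, contradicting the uniqueness of $B$ up to scalar when $B$ is alternating.
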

In particular, if every character of $G$ is afforded by a real representation then
$$
\sum_{\chi\in\Irr(G)} \chi(g) = \#\{v\in G\,|\,v^2 = g\}\qquad(\forall g\in G).
$$
For $r > 2$ the group $G(r,n)$ has non-real representations, so that a Gelfand
model for it does not give the number of square roots of an
element. What does it give?

\begin{defn}\label{d.bar}
For
$$
v = (\sigma, (z_1,\ldots,z_n))\in G(r,n)\qquad(\sigma\in
S_n,\,z_i\in\ZZ_r\,(\forall i)),
$$
define the {\em bar operation}
$$
\bv := (\sigma, (-z_1,\ldots,-z_n)).
$$
An element $v\in G(r,n)$ is an {\em absolute square root} of $g\in G(r,n)$
if $v\cdot \bv=g$. An element $v\in G(r,n)$ is an {\em absolute involution}
if $v\cdot \bv=id$.
\end{defn}

\begin{remark}
Elements of $G(r,n)$ may also be represented by monomial matrices:
$v = (\sigma,(z_1,\ldots,z_n))$ corresponds to $M = (m_{ij})$, where
$$
m_{ij} = \begin{cases}
\omega^{z_j},& \text{ if } i=\sigma(j);\\
0,& \text{ otherwise.}
\end{cases}
$$
Then $\bv$ corresponds to $\bar{M} = (\bar{m}_{ij})$, the (entry-wise)
{\em complex conjugate} of $M$.
\end{remark}

\begin{theorem}\label{t.main}
For any $g\in G(r,n) = \ZZ_r \wr S_n$,
$$
\sum_{\chi\in\Irr(G)} \chi(g) = \#\{v\in G\,|\,v \cdot \bv = g\}.
$$
\end{theorem}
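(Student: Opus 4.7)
The plan is to deduce Theorem~\ref{t.main} from the generalized Frobenius-Schur formula of Bump and Ginzburg, combined with the Stanton-White generalized Robinson-Schensted correspondence. First I would verify that the bar operation $\theta\colon v\mapsto\bv$ is an involutive group automorphism of $G(r,n)$ (immediate from the definition, since $\overline{vw}=\bv\cdot\bar w$ and $\bar{\bv}=v$), and that every irreducible character satisfies $\chi^{\blambda}(\bv)=\overline{\chi^{\blambda}(v)}$. The latter follows at once from the Murnaghan-Nakayama rule (Proposition~\ref{t.murnaghan_grn}): replacing $v$ by $\bv$ leaves the set $\bRHT_\bc(\blambda)$ and all height signs untouched, while negating each cycle color $z(c_i)$ and hence conjugating every root-of-unity factor $\omega^{f(i)z(c_i)}$.

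The Bump-Ginzburg theorem asserts that, for an involutive automorphism $\theta$ of a finite group $G$,
$$\#\{v\in G\,:\,v\cdot\theta(v)=g\}\;=\;\sum_{\chi\in\Irr(G)}\epsilon_\theta(\chi)\,\chi(g),$$
where the twisted indicator $\epsilon_\theta(\chi)\in\{-1,0,+1\}$ vanishes precisely when $\chi\circ\theta\neq\bar\chi$, and otherwise equals $\pm 1$ according as a distinguished $\theta$-twisted bilinear form on the representation space is symmetric or antisymmetric. By the previous paragraph $\chi\circ\theta=\bar\chi$ for every irreducible $\chi$, so $\epsilon_\theta(\chi)\in\{\pm 1\}$ throughout; the task is to exclude the value $-1$.

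To do so I would specialize both sides to $g=e$ and count the absolute involutions of $G(r,n)$ in two ways. Directly, $v=(\sigma,(z_1,\ldots,z_n))$ satisfies $v\bv=e$ iff $\sigma$ is an involution in $S_n$ and $z$ is constant on every cycle of $\sigma$; summing by cycle type gives the exponential generating function $e^{r(x+x^2/2)}$. On the other hand, via the Stanton-White bijection $G(r,n)\leftrightarrow\{(\bP,\bQ)\}$ of same shape, one shows that the bar operation intertwines with the tableau swap $(\bP,\bQ)\mapsto(\bQ,\bP)$; since $v\bv=e$ is equivalent to $\bv=v^{-1}$, and inversion swaps the insertion and recording tableaux under Stanton-White (just as under classical RSK), absolute involutions correspond precisely to pairs with $\bP=\bQ$, i.e., to single $r$-partite standard Young tableaux. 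Consequently
$$\#\{v\in G(r,n)\,:\,v\bv=e\}\;=\;\sum_\blambda f^\blambda\;=\;\sum_\chi\chi(e).$$
Comparing with the Bump-Ginzburg evaluation at $g=e$ yields $\sum_\chi\epsilon_\theta(\chi)\chi(e)=\sum_\chi\chi(e)$; since each dimension $\chi(e)$ is a positive integer and each indicator lies in $\{\pm 1\}$, this forces $\epsilon_\theta(\chi)\equiv +1$, and Bump-Ginzburg then yields the theorem for all $g$.

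The main obstacle is the compatibility statement invoked above: that the bar operation intertwines with swapping $\bP$ and $\bQ$ in the Stanton-White correspondence, and hence restricts to a bijection between absolute involutions and single $r$-partite SYT. Classically (Sch\"utzenberger's theorem) the analogous RSK statement for $S_n$ follows from the symmetry of the algorithm under transposing the permutation matrix; for the wreath product I expect the proof to proceed by unpacking the Stanton-White construction and using that, in the monomial-matrix picture, the bar operation is entry-wise complex conjugation, which commutes with matrix transposition and is therefore compatible with the $(\bP,\bQ)\leftrightarrow(\bQ,\bP)$ symmetry inherent in Stanton-White's insertion.
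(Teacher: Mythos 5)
Your proposal is correct and follows essentially the same route as the paper's second proof (Section~\ref{s.second-proof}): the Stanton--White symmetry $\pi\mapsto(\bP,\bQ)\iff\pi^t\mapsto(\bQ,\bP)$ identifies absolute involutions with single $r$-partite standard Young tableaux, so their number equals $\sum_\chi\chi(e)$, and the Bump--Ginzburg generalization of Frobenius--Schur then upgrades this identity-element case to all $g$. The only cosmetic difference is that you invoke the twisted-indicator form and argue that all indicators equal $+1$, whereas the paper quotes Bump--Ginzburg directly in the conditional form ``if the count matches at the identity, it matches everywhere,'' which makes that bookkeeping unnecessary.
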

In particular, these sums are nonnegative integers.

Two proofs of Theorem~\ref{t.main} will be given in the next two
sections.

\section{A Combinatorial Proof of Theorem~\ref{t.main}}\label{s.direct-proof}

A direct proof, using the Murnaghan-Nakayama rule, is given in
this section.

\begin{lemma}\label{t.chi}
Let $g\in G = G(r,n)$, and fix an ordering $\bc = (c_1,\ldots,c_m)$ of
the disjoint cycles of $g$. Then
$$
\sum_{\chi\in\Irr(G)} \chi(g)
= \sum_f \omega^{\alpha(f)} \prod_{j=0}^{r-1} \sum_{\lambda_j\vdash n_j}
  \chi^{\lambda_j}(\sigma_j),
$$
where the sum is over all functions $f: [m] \to [0,r-1]$,
$z(c_i)$ and $l(c_i)$ are as in Proposition~\ref{t.murnaghan_grn},
$$
\alpha(f) := \sum_{i=1}^{m} f(i) \cdot z(c_i) \in \ZZ_r,
$$
$$
n_j := \sum_{i\in f^{-1}(j)} l(c_i)\qquad(0\le j\le r-1),
$$
and $\sigma_j\in S_{n_j}$ is the product of all disjoint cycles $|c_i|$
with $f(i)=j$ $(0\le j\le r-1)$.
\end{lemma}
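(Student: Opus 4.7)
The plan is to unfold the left-hand side using the Murnaghan--Nakayama rule for $G(r,n)$ (Proposition~\ref{t.murnaghan_grn}), then reindex the resulting sum by separating the ``color'' data from the ``shape'' data.

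First I would write
$$
\sum_{\chi\in\Irr(G)} \chi(g)
= \sum_{\blambda} \chi^{\blambda}(g)
= \sum_{\blambda} \sum_{\bT\in \bRHT_{\bc}(\blambda)}
      \prod_{i=1}^{m} (-1)^{\hgt(rh_i)}\,\omega^{f(i)\cdot z(c_i)},
$$
where $\blambda = (\lambda_0, \ldots, \lambda_{r-1})$ ranges over $r$-partite partitions of $n$, and for a given $\bT$ the function $i \mapsto f(i)$ records which of the $r$ components the $i$-th rim hook is drawn in. The key observation is that specifying an $r$-partite rim hook tableau $\bT$ of some $r$-partite shape is equivalent to specifying (i) a function $f:[m]\to [0,r-1]$ telling us which component each rim hook lives in, (ii) for each $j$, an ordinary partition $\lambda_j \vdash n_j$ where $n_j := \sum_{i\in f^{-1}(j)} l(c_i)$, and (iii) for each $j$, an ordinary rim hook tableau of shape $\lambda_j$ whose rim hooks (in the order inherited from $\bc$) have lengths $l(c_i)$ for $i\in f^{-1}(j)$.

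Using this correspondence I would swap the order of summation to iterate first over $f$, then over the tuples $(\lambda_0,\ldots,\lambda_{r-1})$, then over the per-component rim hook tableaux. The $\omega$-factor depends only on $f$ and collects into $\omega^{\alpha(f)}$. The sign factors $(-1)^{\hgt(rh_i)}$ factor across components, since the rim hooks indexed by $f^{-1}(j)$ contribute independently from those indexed by $f^{-1}(j')$ for $j\ne j'$. Within component $j$, the remaining sum over $\lambda_j$ and over rim hook tableaux of shape $\lambda_j$ with the required rim-hook lengths is exactly the Murnaghan--Nakayama rule for $S_{n_j}$ (Proposition~\ref{t.murnaghan_sn}) applied to the permutation $\sigma_j \in S_{n_j}$ obtained as the product of the disjoint cycles $|c_i|$ with $i\in f^{-1}(j)$. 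This yields precisely $\sum_{\lambda_j \vdash n_j} \chi^{\lambda_j}(\sigma_j)$, and assembling all $r$ components gives the desired right-hand side.

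The argument is essentially bookkeeping, so the main obstacle is nothing deep but rather being careful about well-definedness: one must check that the cycles $|c_i|$ for $i\in f^{-1}(j)$ are genuinely disjoint (inherited from the disjointness of the $c_i$ in $G(r,n)$), so that $\sigma_j$ is a well-defined element of $S_{n_j}$ on the abstract set of its moved points, and that the assignment of rim-hook lengths to cycles on the symmetric-group side matches the ordering convention used in the Murnaghan--Nakayama rule. Once these verifications are in place, the identity follows by a direct rearrangement of summations.
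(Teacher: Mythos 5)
Your proposal is correct and follows essentially the same route as the paper: expand via the Murnaghan--Nakayama rule for $G(r,n)$, observe that an $r$-partite rim hook tableau is equivalent to the data of the component-assignment function $f$ together with an ordinary rim hook tableau in each component, swap the summation order so $f$ comes first, pull out $\omega^{\alpha(f)}$, and recognize each inner sum as the Murnaghan--Nakayama rule for $S_{n_j}$ applied to $\sigma_j$. Your extra remarks on well-definedness of $\sigma_j$ and the ordering convention are exactly the (implicit) bookkeeping the paper also relies on.
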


\begin{proof}
By Proposition~\ref{t.murnaghan_grn} (the Murnaghan-Nakayama rule for $G(r,n)$),
$$
\sum_{\chi\in\Irr(G)} \chi(g) = \sum_{\blambda} \chi^\blambda(g)
= \sum_{\blambda} \sum_{\bT\in \bRHT_\bc(\blambda)}
                  \prod_{i=1}^{m} (-1)^{\hgt(rh_i)} \omega^{f(i) \cdot z(c_i)}.
$$
Recall that each $r$-partite rim hook tableau $\bT$ determines a function
$f:[m]\to [0,r-1]$, where $f(i)$ is the index of the tableau to which a rim hook
is added in step $i$, corresponding to cycle $c_i$ ($1\le i\le m$).
We shall change the order of summation by first summing over the possible functions $f$.
The function $f$ determines which cycles $c_i$ ``go'' to each tableau $T_j$
($0\le j\le r-1$), and therefore also the size
$$
n_j := \sum_{i\in f^{-1}(j)} l(c_i)
$$
of this tableau (but not its shape $\lambda_j$). Also, the product
$$
\omega^{\alpha(f)} := \prod_{i=1}^{m} \omega^{f(i) \cdot z(c_i)}
$$
depends only on the function $f$. Thus
$$
\sum_{\chi\in\Irr(G)} \chi(g)
= \sum_f \omega^{\alpha(f)} \prod_{j=0}^{r-1} \sum_{\lambda_j\vdash n_j}
  \sum_{T_j\in \RHT_{|\bc(j)|}(\lambda_j)} \prod_{i\in f^{-1}(j)} (-1)^{\hgt(rh_i)},
$$
where $|\bc(j)|$ is the sequence of cycles $|c_i|$ with $f(i) = j$, ordered by increasing $i$.

The expression after $\omega^{\alpha(f)}$ is clearly color-free, and depends only on
$\sigma := |g|\in S_n$. Given $\sigma$ and $f$, define
$$
\sigma_j := \text{ product of all disjoint cycles } |c_i| \text{ with } f(i)=j
            \qquad(1\le j\le r-1).
$$
Then $\sigma_j$ permutes a set of size $n_j$, and by abuse of language
we can write $\sigma_j\in S_{n_j}$. By Proposition~\ref{t.murnaghan_sn}
(the Murnaghan-Nakayama rule for $S_n$) we conclude
$$
\sum_{\chi\in\Irr(G)} \chi(g)
= \sum_f \omega^{\alpha(f)} \prod_{j=0}^{r-1} \sum_{\lambda_j\vdash n_j}
  \chi^{\lambda_j}(\sigma_j),
$$
as required.
\end{proof}

\begin{observation}
Let $v\in G = G(r,d)$ be a single colored cycle, and let $w := v \cdot \bar{v}$.
\begin{enumerate}
\item
If $d$ is odd then $w$ is a single colored cycle (of length $d$) with $z(w) = 0$.
Any such $w$ is obtained from $r$ possible cycles $v$.
\item
If $d$ is even then $w$ is a product of two disjoint colored cycles $w_1$ and $w_2$,
each of length $d/2$, and $z(w_1) + z(w_2) = 0$.
Any such $w$ is obtained from $rd/2$ possible cycles $v$.
\end{enumerate}
\end{observation}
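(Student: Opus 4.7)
The plan is to compute $w = v \cdot \bv$ directly from the wreath product law. Writing $v = (\sigma, (z_1,\ldots,z_d))$ with $\sigma$ a $d$-cycle and $\bv = (\sigma, (-z_1,\ldots,-z_d))$, the group operation yields
$$
w = (\sigma^2,\, (w_1,\ldots,w_d)), \qquad w_i = z_{\sigma^{-1}(i)} - z_i.
$$
Since $\sigma$ is a single $d$-cycle, $\sigma^2$ is again a single $d$-cycle when $d$ is odd and a product of two disjoint $d/2$-cycles when $d$ is even; this already supplies the cycle shape claimed for $w$ in both cases.

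For the color claim, I will sum the $w_i$ along each cycle of $\sigma^2$. Each such partial sum telescopes in the $z$ variables, so it can be read off immediately from the orbit structure of $\sigma^2$. In the odd case the sum over the single orbit gives $z(w) = 0$; in the even case the two partial sums are negatives of each other, yielding $z(w_1) + z(w_2) = 0$.

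For the counting of preimages, fix $w$ and proceed in two stages: first count the permutations $\sigma$ with $\sigma^2 = |w|$, then the colorings $z$. In the odd case, $\sigma = |w|^{(d+1)/2}$ is the unique $d$-cycle squaring to the given $d$-cycle $|w|$. In the even case, writing $|w| = (a_0,\ldots,a_{d/2-1})(b_0,\ldots,b_{d/2-1})$, any $d$-cycle $\sigma$ with $\sigma^2 = |w|$ must send $a_0$ into the second orbit $\{b_0,\ldots,b_{d/2-1}\}$, and the value $\sigma(a_0)$ then determines $\sigma$ completely via interleaving; this gives exactly $d/2$ choices of $\sigma$. Once $\sigma$ is fixed, the linear system $z_{\sigma^{-1}(i)} - z_i = w_i$ can be solved by walking along the $d$-cycle $\sigma$: once the value of $z$ at one point is chosen freely in $\ZZ_r$, the rest is forced, and the consistency condition upon returning to the starting point is precisely $\sum_i w_i = 0$, which holds by the previous step. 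This produces $r$ solutions $z$ for each admissible $\sigma$, hence $r$ preimages $v$ in the odd case and $rd/2$ in the even case, as claimed.

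The main obstacle is the interleaving count in the even case, i.e., checking that the $d/2$ choices of $\sigma(a_0)\in\{b_0,\ldots,b_{d/2-1}\}$ really yield $d/2$ distinct $d$-cycles squaring to $|w|$ and exhaust all of them; the remaining arguments are straightforward telescoping and book-keeping.
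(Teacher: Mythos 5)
Your proof is correct and complete: the paper states this result as an \emph{Observation} with no proof at all, and your direct computation of $w=(\sigma^2,(z_{\sigma^{-1}(i)}-z_i)_i)$ from the wreath product law, the telescoping of colors along the orbits of $\sigma^2$, and the two-stage preimage count (unique $\sigma=|w|^{(d+1)/2}$ for odd $d$, the $d/2$ interleavings for even $d$, then $r$ colorings via the cocycle/consistency condition $\sum_i w_i=0$) is exactly the routine verification the authors intended the reader to supply.
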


\begin{corollary}\label{t.root}
Let $g\in G = G(r,n)$, and fix an ordering $\bc = (c_1,\ldots,c_m)$ of
the disjoint cycles of $g$. Then
$$
\#\{v\in G\,|\,v \cdot \bv = g\} = \prod_{d=1}^{\infty} N_d(\bc),
$$
where
$$
N_d(\bc) :=
\begin{cases}
\sum_{P\in\Pi_d^{2,1}(\bc)} (dr)^{n_2(P)} r^{n_1(P)},&\hbox{if $d$ is odd};\\
\sum_{P\in\Pi_d^{2}(\bc)} (dr)^{n_2(P)},&\hbox{if $d$ is even}.
\end{cases}
$$
Here $\Pi_d^{2,1}(\bc)$ is the set of all partitions $P$ of the set $\{i\,|\,l(c_i) = d\}$
into $n_2(P)$ pairs and $n_1(P)$ singletons such that
$z(c_i) + z(c_j) = 0$ for each pair $\{i,j\}$ in $P$ and
$z(c_i) = 0$ for each singleton $\{i\}$ in $P$.
$\Pi_d^{2}(\bc)$ is defined similarly, where only pairs are allowed.
\end{corollary}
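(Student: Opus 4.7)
The plan is to reduce the global count on $G(r,n)$ to a product of independent local counts indexed by cycle length, using the observation stated just before the corollary together with the fact that the map $v \mapsto v \cdot \bv$ is compatible with cycle decomposition.

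First I would check the compatibility of the bar operation and multiplication with disjoint cycles: if $v = v_1 \cdots v_k$ is the disjoint cycle decomposition of $v\in G(r,n)$ (viewed as a colored permutation), then $\bv = \bar{v}_1 \cdots \bar{v}_k$ has the same support decomposition, hence $v \cdot \bv = (v_1 \bar{v}_1)\cdots (v_k\bar{v}_k)$, and the supports of the factors $v_i \bar{v}_i$ are pairwise disjoint. Thus, to specify a $v$ with $v\cdot\bv = g$ it suffices, for each disjoint cycle $c$ of $g$ of some length $d$, to decide which cycle(s) of $v$ produce it; by the observation, the only possibilities are that $c$ comes from a single $v$-cycle of the same length $d$ (only when $d$ is odd, contributing one singleton cycle of $g$ with color~$0$) or that $c$ is paired with some other cycle $c'$ of $g$ of the same length $d$, both arising from a single $v$-cycle of length $2d$ (in which case $z(c)+z(c')=0$). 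These constraints, partitioned by cycle length $d$, are exactly the data recorded by a choice of $P\in\Pi_d^{2,1}(\bc)$ for $d$ odd and $P\in\Pi_d^{2}(\bc)$ for $d$ even.

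Next I would observe that the choices across distinct cycle lengths are independent, and within a given length $d$ the choices across distinct parts of $P$ are independent (since the supports are disjoint). Applying the observation part (1) gives $r$ pre-images $v$-cycles for every singleton (in the odd-$d$ case) and part (2) gives $rd$ pre-image $v$-cycles for every pair of cycles of length $d$ (in both parities, since the $v$-cycle producing the pair has length $2d$, which is even, so the formula $r \cdot (2d)/2 = rd$ applies). Multiplying these contributions yields
\[
N_d(\bc) =
\begin{cases}
\displaystyle\sum_{P\in \Pi_d^{2,1}(\bc)} (dr)^{n_2(P)} r^{n_1(P)}, & d \text{ odd},\\[6pt]
\displaystyle\sum_{P\in \Pi_d^{2}(\bc)} (dr)^{n_2(P)}, & d \text{ even},
\end{cases}
\]
and taking the product over all $d$ gives the desired count.

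The main obstacle is to confirm rigorously that every $v$ with $v\cdot\bv = g$ arises uniquely from such data: one must verify that distinct cycles of $v$ produce disjoint support sets in $g$, and that conversely, given the partition $P$ and a choice of $v$-cycle for each part, one can reconstruct a unique $v$. This is essentially a bookkeeping step built on the observation, but it requires carefully tracking how the colors $z(c_i)$ arise from the single $v$-cycle in each case (yielding the color conditions in the definitions of $\Pi_d^{2,1}$ and $\Pi_d^{2}$).
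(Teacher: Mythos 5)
Your proposal is correct and follows essentially the same route as the paper, which states the corollary as an immediate consequence of the preceding Observation via exactly this decomposition: a square root $v$ is determined by a partition of the cycles of $g$ (grouped by length) into singletons and pairs, with the Observation supplying both the color constraints and the local counts $r$ and $r\cdot(2d)/2 = rd$. Your bookkeeping of why singletons occur only for odd $d$ and why pairs contribute $rd$ in both parities matches the intended argument.
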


\begin{corollary}\label{t.req1}$(r=1)$
Let $\sigma\in S_n$, and fix an ordering $\bc = (c_1,\ldots,c_m)$ of
the disjoint cycles of $\sigma$. Then
$$
\#\{v\in S_n\,|\,v^2 = \sigma\} = \prod_{d=1}^{\infty} N_d(\bc),
$$
where
$$
N_d(\bc) :=
\begin{cases}
\sum_{P\in\Pi_d^{2,1}(\bc)} d^{n_2(P)},&\hbox{if $d$ is odd};\\
\sum_{P\in\Pi_d^{2}(\bc)} d^{n_2(P)},&\hbox{if $d$ is even}.
\end{cases}
$$
Here $\Pi_d^{2,1}(\bc)$ is the set of all partitions $P$ of the set $\{i\,|\,l(c_i) = d\}$
into $n_2(P)$ pairs and $n_1(P)$ singletons, and
$\Pi_d^{2}(\bc)$ is the set of all partitions of this set into $n_2(P)$ pairs
(with no singletons).
\end{corollary}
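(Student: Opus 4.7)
The plan is to derive this corollary as the $r=1$ specialization of Corollary~\ref{t.root}. The key observation is that $\ZZ_1\wr S_n = S_n$ (since $\ZZ_1$ is the trivial group) and that when $r=1$ the bar operation from Definition~\ref{d.bar} is the identity map: $\bv = v$ for every $v$, because negation in $\ZZ_1$ is trivial. Consequently $v\cdot\bv = v^2$, so the left-hand side of Corollary~\ref{t.root} becomes exactly $\#\{v\in S_n\,|\,v^2=\sigma\}$, matching the left-hand side of the statement to be proved.

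For the right-hand side, I would substitute $r=1$ directly into the formula for $N_d(\bc)$ of Corollary~\ref{t.root}. When $r=1$ every cycle $c_i$ automatically satisfies $z(c_i)=0$, so the color constraints $z(c_i)+z(c_j)=0$ on pairs and $z(c_i)=0$ on singletons that define $\Pi_d^{2,1}(\bc)$ and $\Pi_d^{2}(\bc)$ are vacuous. These sets therefore reduce to the ordinary set partitions of $\{i\,|\,l(c_i)=d\}$ into pairs and singletons (or into pairs only), exactly as defined in Corollary~\ref{t.req1}. The weights also simplify cleanly: $(dr)^{n_2(P)}r^{n_1(P)} = d^{n_2(P)}$ for odd $d$, and $(dr)^{n_2(P)} = d^{n_2(P)}$ for even $d$.

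No real obstacle is expected, since Corollary~\ref{t.root} will already have been established from Lemma~\ref{t.chi} and the preceding cycle-counting observation. The only thing worth spelling out explicitly is the identification of $\bv$ with $v$ when $r=1$, to confirm that ``absolute square roots'' in $G(1,n)$ are literally square roots in $S_n$. The substantive takeaway is that the statement recovers the classical formula of Frobenius for the number of square roots of a permutation, which serves as a sanity check on the general result.
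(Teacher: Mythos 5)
Your proposal is correct and matches the paper's intent exactly: the paper states Corollary~\ref{t.req1} immediately after Corollary~\ref{t.root} with no separate argument, treating it as the $r=1$ specialization in which $\bv=v$, the color constraints become vacuous, and the weights $(dr)^{n_2(P)}r^{n_1(P)}$ collapse to $d^{n_2(P)}$. Your explicit check that absolute square roots in $G(1,n)$ are literal square roots in $S_n$ is the right point to spell out, and nothing further is needed.
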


\begin{proof} (of Theorem~\ref{t.main})
For $0\le j\le r-1$ let $\bc(j)$ be the sequence of cycles $c_i$ with $f(i) = j$,
ordered by increasing $i$.
By Lemma~\ref{t.chi}, Proposition~\ref{t.frobenius} for the symmetric groups
$G = S_{n_j}$ (all of whose representations are real),
and Corollary~\ref{t.req1} for these groups:
\begin{eqnarray*}
\sum_{\chi\in\Irr(G)} \chi(g)
&=& \sum_f \omega^{\alpha(f)} \prod_{j=0}^{r-1} \sum_{\lambda_j\vdash n_j}
    \chi^{\lambda_j}(\sigma_j)\\
&=& \sum_f \omega^{\alpha(f)} \prod_{j=0}^{r-1} \#\{v_j\in S_{n_j}\,|\,v_j^2 = \sigma_j\}\\
&=& \sum_f \omega^{\alpha(f)} \prod_{j=0}^{r-1}
    \prod_{d=1}^{\infty} \sum_{P_{j,d}\in\Pi_d^{2(,1)}(|\bc(j)|)} d^{n_2(P_{j,d})}.
\end{eqnarray*}
Here we used the notations of the previous lemmas and corollaries, as well as the the
shorthand notation
$$
\Pi_d^{2(,1)}(|\bc(j)|) := \begin{cases}
\Pi_d^{2,1}(|\bc(j)|),& \text{ if $d$ is odd;}\\
\Pi_d^{2}(|\bc(j)|),& \text{ if $d$ is even.}
\end{cases}
$$
A more succinct expression is
$$
\sum_{\chi\in\Irr(G)} \chi(g)
= \sum_f \omega^{\alpha(f)} \sum_{P\in\Pi_f^{2(,1)}(|\bc|)} \beta(P),
$$
where $\Pi_f^{2(,1)}(|\bc|)$ is the set of all partitions $P$ of the set $[m]$
into pairs and singletons such that,
for each pair $\{i,j\}$ in $P$, $l(c_i) = l(c_j)$ and $f(i) = f(j)$,
and for each singleton $\{i\}$ in $P$ the length $l(c_i)$ is odd;
and for any such partition $P$
$$
\beta(P) := \prod_{d=1}^{\infty} d^{n_{2,d}(P)},
$$
where $n_{2,d}(P)$ is the number of pairs $\{i,j\}$ in $P$ such that
$l(c_i) = l(c_j) = d$.

The next step is to change the order of summation:
$$
\sum_{\chi\in\Irr(G)} \chi(g)
= \sum_{P\in\Pi^{2(,1)}(|\bc|)} \beta(P) \sum_{f\in F_P} \omega^{\alpha(f)},
$$
where $\Pi^{2(,1)}(|\bc|)$ is the set of all partitions as above but {\em without}
the restriction $f(i) = f(j)$; and where $F_P$ is the set of all functions
$f: [m] \to [0,r-1]$ such that $f(i) = f(j)$ whenever $\{i,j\}$ is a pair in $P$.
This requirement means that $f$ is constant on each part of $P$, and therefore
determines a unique function $f': P \to [0,r-1]$, where $P$ is viewed as a set
of pairs and singletons.
For each part $p\in P$ let $l(p)$ be its length
($= l(c_i) = l(c_j)$ if $p = \{i,j\}$, $= l(c_i)$ if $p = \{i\}$)
and $z(p)$ its color
($= z(c_i) + z(c_j)$ if $p = \{i,j\}$, $= z(c_i)$ if $p = \{i\})$.
Recalling the definition of $\alpha(f)$ from Lemma~\ref{t.chi},
\begin{eqnarray*}
\sum_{\chi\in\Irr(G)} \chi(g)
&=& \sum_{P\in\Pi^{2(,1)}(|\bc|)} \beta(P)
    \sum_{f':P\to [0,r-1]} \prod_{p\in P} \omega^{f'(p) \cdot z(p)}\\
&=& \sum_{P\in\Pi^{2(,1)}(|\bc|)} \beta(P)
    \prod_{p\in P} \sum_{j=0}^{r-1} \omega^{j \cdot z(p)}.
\end{eqnarray*}
Now use the observation
$$
\sum_{j=0}^{r-1} \omega^{j \cdot a} = \begin{cases}
r,&\hbox{if $0 =a\in \ZZ_r$;}\\
0,&\hbox{if $0\ne a\in \ZZ_r$}
\end{cases}
$$
to simplify:
$$
\sum_{\chi\in\Irr(G)} \chi(g)
= \sum_{P\in\Pi^{2(,1)}(|\bc|) \atop z(p)=0\,(\forall p\in P)} \beta(P) \cdot r^{n_2(P) + n_1(P)}.
$$
Denote by $\Pi^{2(,1)}(\bc)$ the set of partitions $P\in\Pi^{2(,1)}(|\bc|)$ such that
$z(p)=0$ for all $p\in P$, in accordance with the notation in Corollary~\ref{t.root}.
Recalling the definition of $\beta(P)$,
\begin{eqnarray*}
\sum_{\chi\in\Irr(G)} \chi(g)
&=& \sum_{P\in\Pi^{2(,1)}(\bc)} \beta(P) \cdot r^{n_2(P) + n_1(P)}\\
&=& \prod_{d=1}^{\infty} \sum_{P_d\in\Pi_d^{2(,1)}(\bc)} (dr)^{n_2(P_d)} r^{n_1(P_d)}.
\end{eqnarray*}
Comparison to Corollary~\ref{t.root} completes the proof:
$$
\sum_{\chi\in\Irr(G)} \chi(g) = \#\{v\in G\,|\,v \cdot \bv = g\}.
$$
\end{proof}

\section{A Second Proof of Theorem~\ref{t.main}}\label{s.second-proof}



For a complex matrix $A$ let $\bar A$ be the matrix obtained from $A$
by complex conjugation of each entry, and let $A^t$ be the transposed
matrix. Consider the $n$-dimensional natural representation $\varphi$
of $G(r,n)$. An element $\pi\in G(r,n)$ is called {\em symmetric} if
$\varphi(\pi)$ is a symmetric matrix, that is $\varphi(\pi)^t=\varphi(\pi)$.

\begin{observation}\label{t.matrices}
Let $\pi = (\sigma, (z_1,\ldots,z_n))\in G(r,n)$. Then:
\begin{enumerate}
\item
$\overline{\varphi (\pi)} = \varphi (\bar \pi)$, where $\bar\pi:=(\sigma, (-z_1,\ldots,-z_n)$.
\item
$ \varphi (\pi)^t= \varphi(\pi^t)$, where $\pi^t:=(\bar\pi)^{-1}$.
\item
$\pi\in G(r,n)$ is symmetric if and only if it is an absolute involution:
$\pi \cdot \bar\pi = id$.
\end{enumerate}
\end{observation}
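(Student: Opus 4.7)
Each of the three parts reduces to a direct unpacking of the monomial-matrix formula $m_{ij} = \omega^{z_j}$ when $i=\sigma(j)$ (and $0$ otherwise). My plan is to first establish two ambient facts and then apply them in sequence. First, $\varphi(\pi)$ is \emph{unitary}: it has a single root-of-unity entry in each row and each column, so $\varphi(\pi)^{-1} = \overline{\varphi(\pi)^t}$, equivalently $\varphi(\pi)^t = \overline{\varphi(\pi)^{-1}}$. Second, $\varphi$ is \emph{faithful}: the underlying permutation $\sigma$ is recovered from the support of the matrix, and each color $z_j$ from the unique nonzero entry in column $j$.

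Part (1) I would handle first by pure inspection: complex conjugation preserves the support of $\varphi(\pi)$ and sends each entry $\omega^{z_j}$ to $\omega^{-z_j}$, which is entry-by-entry the matrix of $\varphi(\sigma, (-z_1, \ldots, -z_n)) = \varphi(\bar\pi)$. Part (2) then follows by chaining unitarity with part (1):
$$
\varphi(\pi)^t \;=\; \overline{\varphi(\pi)^{-1}} \;=\; \overline{\varphi(\pi^{-1})} \;=\; \varphi\bigl(\overline{\pi^{-1}}\bigr).
$$
To close the loop I need $\overline{\pi^{-1}} = (\bar\pi)^{-1}$. The cleanest justification is that the bar operation is an involutive group automorphism of $G(r,n)$ (negating the $\ZZ_r$-tuple manifestly respects the semidirect-product multiplication), and every group automorphism commutes with inversion. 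This gives $\varphi(\pi)^t = \varphi((\bar\pi)^{-1}) = \varphi(\pi^t)$.

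Part (3) is then formal: $\pi$ is symmetric iff $\varphi(\pi)^t = \varphi(\pi)$, iff $\varphi(\pi^t) = \varphi(\pi)$ by part (2), iff $\pi^t = \pi$ by faithfulness, iff $(\bar\pi)^{-1} = \pi$, iff $\pi\cdot\bar\pi = \mathrm{id}$. Every step is an equivalence, so symmetry is equivalent to being an absolute involution.

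There is no real obstacle here; the one spot where I would pay attention to conventions is the short verification that bar is a homomorphism of $G(r,n)$, which uses only the semidirect-product multiplication rule but is easy to get tangled in if one is careless with which permutation acts on which color tuple.
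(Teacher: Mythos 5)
The paper states this as an unproved \emph{Observation}, so there is no argument to compare against; your verification is correct and supplies exactly the intended routine justification (entrywise inspection of the monomial matrix for part (1), unitarity of $\varphi(\pi)$ together with the fact that the bar operation is an involutive automorphism for part (2), and faithfulness of $\varphi$ for part (3)). Nothing is missing.
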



Recall the notions of $r$-partite partitions and tableaux from
Section~\ref{s.preliminaries}. Stanton and White~\cite{SW} described
and studied the following generalization of the Robinson-Schensted
algorithm to $G(r,n)$. Given $\pi\in G(r,n)$ produce a pair
$(\bP,\bQ)$ of $r$-partite standard Young tableaux,
where $\bP = (P_0,\ldots,P_{r-1})$ and $\bQ = (Q_0,\ldots,Q_{r-1})$,
by mapping the letters colored by $i$ to the $i$-th tableaux $P_i$
according to the usual Robinson-Schensted algorithm;
their positions are recorded in the tableaux $Q_i$. This gives a
bijection from the set of all elements in $G(r,n)$ to the set of
all pairs of $r$-partite standard Young tableaux of same shape.

The following lemma is a reformulation of~\cite[Corollary 29]{SW}.

\begin{lemma}\label{t.RS2}
For every $\pi\in G(r,n)$
$$
\pi {\stackrel{\rm RS}{\longrightarrow}} (\bP,\bQ)
\Longleftrightarrow
\pi^t {\stackrel{\rm RS}{\longrightarrow}} (\bQ,\bP).
$$
\end{lemma}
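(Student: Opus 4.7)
The plan is to reduce the colored RS symmetry to the classical RS symmetry one color class at a time.

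First I would unpack the Stanton--White algorithm in the convention needed here. Writing $\pi=(\sigma,(c_1,\dots,c_n))$, for each color $k\in\ZZ_r$ let $J_k:=\{j:c_j=k\}$ be the positions carrying color $k$, and consider the two-line array
$$
A_k(\pi) = \begin{pmatrix} j_1 & j_2 & \cdots \\ \sigma(j_1) & \sigma(j_2) & \cdots \end{pmatrix}
\qquad (j_1<j_2<\cdots,\ j_s\in J_k).
$$
Applying the ordinary Robinson--Schensted correspondence to $A_k(\pi)$ yields a pair of standard Young tableaux $(P_k,Q_k)$ of the same shape $\lambda_k$, with entries of $P_k$ taken from $\sigma(J_k)$ and entries of $Q_k$ taken from $J_k$. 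The Stanton--White correspondence is then $\pi\mapsto(\bP,\bQ)=((P_0,\dots,P_{r-1}),(Q_0,\dots,Q_{r-1}))$.

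Next I would compute $\pi^t$ explicitly from the definition $\pi^t=(\bar\pi)^{-1}$. A direct calculation using the group law in $G(r,n)$ gives
$$
\pi^t = (\sigma^{-1},(c_{\sigma(1)},\dots,c_{\sigma(n)})).
$$
From this formula one sees that position $i$ in $\pi^t$ carries color $k$ iff $\sigma^{-1}(i)\in J_k$, i.e.\ iff $i\in\sigma(J_k)$. Moreover, the value of $\pi^t$ at such a position $i=\sigma(j)$ is $j$. Consequently the color-$k$ two-line array for $\pi^t$ is
$$
A_k(\pi^t) = \begin{pmatrix} i_1 & i_2 & \cdots \\ \sigma^{-1}(i_1) & \sigma^{-1}(i_2) & \cdots \end{pmatrix}
\qquad (i_1<i_2<\cdots,\ i_s\in\sigma(J_k)),
$$
which is obtained from $A_k(\pi)$ by swapping the two rows and sorting columns by the top entry—precisely the classical inversion of a two-line array.

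The last step is to apply the classical RS symmetry $w\stackrel{\rm RS}{\to}(P,Q)\iff w^{-1}\stackrel{\rm RS}{\to}(Q,P)$ separately to each color class: since $A_k(\pi^t)=A_k(\pi)^{-1}$, the RS output for color $k$ in $\pi^t$ is $(Q_k,P_k)$. Assembling the $r$ color classes gives $\pi^t\mapsto(\bQ,\bP)$, which is one direction of the lemma. The other direction follows from the easy identity $(\pi^t)^t=\pi$ (verified using that the bar map is a group homomorphism, so $\overline{v^{-1}}=\bar v^{\,-1}$).

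The main obstacle is bookkeeping: one must be careful that in the Stanton--White setup the color-$k$ positions of $\pi^t$ are exactly the color-$k$ \emph{values} of $\pi$, and vice versa, so that the color-wise sub-arrays really match up and the classical symmetry can be invoked. Once the explicit formula for $\pi^t$ and the identification $\sigma(J_k)=\{i:i\text{ has color }k\text{ in }\pi^t\}$ are in hand, the rest is a direct appeal to the ordinary Robinson--Schensted symmetry theorem.
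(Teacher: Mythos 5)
The paper does not actually prove this lemma: it states it as ``a reformulation of [Stanton--White, Corollary~29]'' and cites it. You have supplied a genuine self-contained proof, and your strategy --- restrict to each color class, observe that the color-$k$ two-line array of $\pi^t$ is the row-swapped (inverted) color-$k$ two-line array of $\pi$, and invoke the classical Robinson--Schensted symmetry $w\mapsto(P,Q)\iff w^{-1}\mapsto(Q,P)$ colorwise --- is exactly the right mechanism and is essentially how the Stanton--White result is obtained. This buys the reader an actual argument where the paper offers only a pointer.

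There is one bookkeeping slip, precisely at the spot you flag as the main obstacle. From the paper's group law one gets $(\bar\pi)^{-1}=(\sigma^{-1},(c_{\sigma(1)},\dots,c_{\sigma(n)}))$, as you write; but then position $i$ of $\pi^t$ carries color $c_{\sigma(i)}$, i.e.\ color $k$ occurs at position $i$ iff $\sigma(i)\in J_k$, which is \emph{not} the same as your next assertion that the color-$k$ positions of $\pi^t$ are $\sigma(J_k)$ (that would require the color at position $i$ to be $c_{\sigma^{-1}(i)}$). The assertion you actually use --- color-$k$ positions of $\pi^t$ equal $\sigma(J_k)$, with value $j$ at position $\sigma(j)$ --- is the correct one for the lemma: it is what the matrix-transpose characterization $\varphi(\pi^t)=\varphi(\pi)^t$ from Observation~\ref{t.matrices} gives, since transposing moves the entry $\omega^{c_j}$ from matrix position $(\sigma(j),j)$ to $(j,\sigma(j))$. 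The mismatch traces to an inconsistency in the paper itself between its group law and its monomial-matrix convention (as written, $\varphi$ fails to be a homomorphism), not to an error in your reasoning. If you take $\varphi(\pi)^t$ as the operative definition of $\pi^t$ --- which is what the lemma is really about --- your color identification is right, the colorwise arrays invert as claimed, and the proof is complete; the converse direction via $(\pi^t)^t=\pi$ is also fine.
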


We deduce

\begin{corollary}\label{t.dim}
The dimension of the model of $G(r,n)$ is equal to the number of
absolute involutions in $G(r,n)$.
\end{corollary}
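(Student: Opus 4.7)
The plan is to connect both sides of the identity to the same combinatorial object, namely the set of all $r$-partite standard Young tableaux.

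First I would evaluate the left-hand side. Since a Gelfand model is, by definition, the multiplicity-free sum of all irreducibles, its dimension is $\sum_{\blambda}\dim S^\blambda$, where the sum ranges over all $r$-partite partitions $\blambda$ of $n$. By the fact recalled in Section~\ref{s.preliminaries}, $\dim S^\blambda$ equals the number of $r$-partite standard Young tableaux of shape $\blambda$, so the dimension of the model equals the total number of $r$-partite standard Young tableaux of size $n$.

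Next I would identify the right-hand side via the Stanton--White correspondence. By Observation~\ref{t.matrices}(3), the absolute involutions in $G(r,n)$ are exactly the symmetric elements, and by parts (1)--(2) these are exactly the $\pi$ satisfying $\pi=\pi^t$. Now apply Lemma~\ref{t.RS2}: if $\pi\stackrel{\rm RS}{\longrightarrow}(\bP,\bQ)$, then $\pi^t\stackrel{\rm RS}{\longrightarrow}(\bQ,\bP)$. Since the Stanton--White map is a bijection, $\pi=\pi^t$ if and only if $\bP=\bQ$. Consequently, $\pi\mapsto\bP$ is a bijection between the set of absolute involutions in $G(r,n)$ and the set of $r$-partite standard Young tableaux of size $n$.

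Combining the two counts yields
\[
\dim(\text{model})=\sum_{\blambda}\dim S^\blambda=\#\{\text{$r$-partite SYT of size $n$}\}=\#\{\pi\in G(r,n)\,|\,\pi\cdot\bar\pi=id\},
\]
which is the claim. The only step that requires a small bit of care is checking that the characterization of absolute involutions as fixed points of the map $\pi\mapsto\pi^t$ is correctly identified with the condition $\bP=\bQ$ under Stanton--White; this is immediate from Lemma~\ref{t.RS2} together with the injectivity of the map, and I do not expect a real obstacle here.
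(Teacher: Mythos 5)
Your proposal is correct and follows essentially the same route as the paper: identify the model's dimension with the total number of $r$-partite standard Young tableaux, and use Observation~\ref{t.matrices}(3) together with Lemma~\ref{t.RS2} to put absolute involutions (= symmetric elements, i.e.\ fixed points of $\pi\mapsto\pi^t$) in bijection with single tableaux via $\bP=\bQ$. You merely spell out the fixed-point argument that the paper leaves implicit.
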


\begin{proof}
It is well known that the dimension of the irreducible $G(r,n)$
representation indexed by an $r$-partite partition $\blambda$ is
the number of $r$-partite SYT of shape $\blambda$.
By Lemma~\ref{t.RS2}, the number of symmetric elements $\pi\in G(r,n)$
is equal to the number of $r$-partite SYT.
Observation~\ref{t.matrices}(3) completes the proof.
\end{proof}

It should be noted that corollary~\ref{t.dim} is analogous to a
remarkable theorem of Klyachko~\cite{K1, K2} and Gow~\cite{G}
regarding the dimension of the model for $GL_n(\FF_q)$.

\bigskip

The following generalization of the Frobenius-Schur theorem (Proposition~\ref{t.frobenius})
was proved by Bump and Ginzburg.

\begin{proposition}\label{t.bg1}~\cite[Theorem 3]{BG}
Let $G$ be a finite group, let $\tau$ be an automorphism of $G$
satisfying $\tau^2=1$, and let $z\in G$ such that $z^2=id$.
If
$$
\sum_{\rho\in \Irr(g)} \chi(id)=\#\{w\in G:\ w \cdot \tau(w)=z\}
$$
then
$$
\sum_{\rho\in \Irr(g)} \chi(g)=\#\{w\in G:\ w \cdot \tau(w)=gz\}\qquad (\forall g\in G).
$$
\end{proposition}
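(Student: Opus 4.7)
\medskip
\noindent\textbf{Proof strategy.}
The plan is to combine twisted Frobenius--Schur theory with an extremality argument that forces $z$ to act as a scalar in every irreducible representation. First I would set up the twisted count $N_\tau(h):=\#\{w\in G:w\tau(w)=h\}$ and verify that it is a class function on $G$: the substitution $w=h'v\tau(h')^{-1}$ gives $w\tau(w)=h'\bigl(v\tau(v)\bigr)(h')^{-1}$, so conjugation by $h'\in G$ permutes solutions without changing their count. Expanding in irreducible characters and computing Fourier coefficients yields
$$
N_\tau(h)=\sum_{\chi\in\Irr(G)}\epsilon_\tau(\chi)\,\chi(h),\qquad \epsilon_\tau(\chi):=\frac{1}{|G|}\sum_{w\in G}\chi\bigl(w\tau(w)\bigr),
$$
the $\tau$-twisted Frobenius--Schur indicator. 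A Schur-type analysis of $\tau$-invariant bilinear forms on the representation space (parallel to the classical proof of Proposition~\ref{t.frobenius} and using $\tau^2=1$) shows $\epsilon_\tau(\chi)\in\{-1,0,1\}$; in particular $|\epsilon_\tau(\chi)|\le 1$.

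Next I would specialize the displayed formula to $h=z$. The hypothesis reads
$$
\sum_\chi \chi(id)=N_\tau(z)=\sum_\chi \epsilon_\tau(\chi)\,\chi(z),
$$
while termwise $|\epsilon_\tau(\chi)\,\chi(z)|\le 1\cdot \chi(id)=\chi(id)$. A sum of complex numbers whose absolute values are bounded by prescribed nonnegative reals can equal the sum of those bounds only when every term attains its bound in both modulus and argument. Hence $\epsilon_\tau(\chi)\,\chi(z)=\chi(id)$ for every $\chi\in\Irr(G)$.

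This extremal equality has two consequences: $|\epsilon_\tau(\chi)|=1$, so $\epsilon_\tau(\chi)=\pm 1$; and $|\chi(z)|=\chi(id)$, which by the equality case applied to the unitary matrix $\rho_\chi(z)$ forces $\rho_\chi(z)=\lambda_\chi\cdot I$ for some scalar $\lambda_\chi$. Since $z^2=id$ we obtain $\lambda_\chi\in\{\pm 1\}$, hence $\chi(z)=\lambda_\chi\chi(id)$ and $\epsilon_\tau(\chi)=\lambda_\chi$. For arbitrary $g\in G$ the scalar action of $z$ gives $\chi(gz)=\lambda_\chi \chi(g)$, so
$$
N_\tau(gz)=\sum_\chi \epsilon_\tau(\chi)\,\chi(gz)=\sum_\chi \lambda_\chi^{\,2}\chi(g)=\sum_\chi \chi(g),
$$
which is the desired identity.

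The main obstacle is erecting the twisted Frobenius--Schur expansion together with the integrality $\epsilon_\tau(\chi)\in\{-1,0,1\}$; once these are in hand, the rest is a short equality-case computation. The class-function check and Fourier inversion are routine, but the integrality of $\epsilon_\tau$ needs a careful Schur-lemma analysis of $\tau$-invariant forms and is the step where the hypothesis $\tau^2=1$ is genuinely used.
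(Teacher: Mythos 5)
The paper offers no proof of this proposition at all --- it is quoted directly from Bump and Ginzburg \cite[Theorem 3]{BG} --- so there is no internal argument to compare against; what you have written is a self-contained derivation, and it is correct. Your proof rests on two pillars. The first is the twisted Frobenius--Schur expansion $N_\tau(h)=\sum_{\chi\in\Irr(G)}\epsilon_\tau(\chi)\chi(h)$ together with $\epsilon_\tau(\chi)\in\{-1,0,1\}$; this is precisely the Kawanaka--Matsuyama theorem (reference \cite{KM}, which already appears in the paper's bibliography), and you rightly identify it as the one non-routine input and the place where $\tau^2=1$ is used. The second is the equality-case argument at $h=z$: since $|\epsilon_\tau(\chi)\chi(z)|\le\chi(id)$ termwise and the hypothesis says the sum attains $\sum_\chi\chi(id)$, every term must equal its bound, forcing $\epsilon_\tau(\chi)\ne 0$, $|\chi(z)|=\chi(id)$, hence $\rho_\chi(z)=\lambda_\chi I$ with $\lambda_\chi=\epsilon_\tau(\chi)=\pm1$ (using $z^2=id$), and then $\epsilon_\tau(\chi)\chi(gz)=\lambda_\chi^2\chi(g)=\chi(g)$ summed over $\chi$ gives the conclusion. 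Both steps check out. The only pedantic caveat: the Fourier coefficient of the class function $N_\tau$ against $\chi$ is a priori $\overline{\epsilon_\tau(\chi)}$ rather than $\epsilon_\tau(\chi)$, which is harmless exactly because the Kawanaka--Matsuyama theorem makes the indicator real --- but you should say so, since otherwise the expansion you write down is not literally the orthogonality computation. This is essentially the route Bump and Ginzburg themselves take, so I would describe your proposal as reconstructing the cited source rather than replacing it with a genuinely different argument.
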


Now let $G := G(r,n)$, $\tau$ be the bar operation from Definition~\ref{d.bar},
and $z:=id$.
By Corollary~\ref{t.dim}, the assumptions of Theorem~\ref{t.bg1} are satisfied,
implying Theorem~\ref{t.main}.

\qed

\section{Proof of Theorem~\ref{t.model}}\label{s.proof-of-model}

We will first prove the theorem for odd $r$. The necessary
modifications for the more complicated case of even $r$ will be
indicated afterwards.

\subsection{The Case of Odd $r$}

\subsubsection{Part 1}
We start by showing that $\rho$ can be extended to a group homomorphism.
Recall the definition of the {\em inversion set} of a
permutation $\sigma\in S_n$,
$$
\Inv(\sigma):=\{\,\{i,j\}:\,(j - i)\cdot(\sigma(j) - \sigma(i)) < 0\}.
$$
For each involution $v\in S_n$ (including $v=id$), let $\Pair(v)$ be the set of
pairs $\{i,j\}$ such that $(i,j)$ is a 2-cycle of $v$.

\begin{defn}\label{d.sign_o}
For an element $\pi\in G(r,n)$ and an absolute involution $w\in I_{r,n}$ let
$$
\sign_o(\pi,w):=(-1)^{\#(\Inv(|\pi|) \cap \Pair(|w|))}.
$$
\end{defn}

Define a map $\rho: G(r,n)\to GL(V_{r,n})$ by
$$ 
\rho(\pi)C_w := \sign_o(\pi,w) \cdot C_{\pi w\pi^t}
\qquad (\forall \pi\in G(r,n), w\in I_{r,n}).
$$ 

\medskip

One can verify that this definition of $\rho$ coincides,
on the set $S$ of generators of $G(r,n)$,
with the previous definition~(\ref{e.linear_action}).
It thus suffices to show that $\rho$ is a group homomorphism.
By definition of $\rho$, it suffices to prove that
\begin{equation}\label{e.sign-1-hom}
\sign_o(\pi_2\pi_1,w)=\sign_o(\pi_1,w)\cdot \sign_o(\pi_2,\pi_1 w{\pi_1^t}).
\end{equation}
Indeed, let $X[\text{condition}]$ be $-1$ if the condition holds, and $1$ otherwise.
Then, for any $\pi_1, \pi_2\in G(r,n)$, $w\in I_{r,n}$ and $i \ne j$,
denoting $\sigma_1 := |\pi_1|$, $\sigma_2 := |\pi_2|$ and $v := |w|$:
$$
\{i,j\}\in \Pair(v) \iff \{\sigma_1(i),\sigma_1(j)\}\in \Pair(\sigma_1 v \sigma_1^{-1})
$$
and
$$
X[\{i,j\}\in \Inv(\sigma_2\sigma_1)] =
X[\{i,j\}\in \Inv(\sigma_1)] \cdot
X[\{\sigma_1(i),\sigma_1(j)\}\in \Inv(\sigma_2)].
$$
Thus
$$
X[\{i,j\}\in \Inv(|\pi_2\pi_1|) \cap \Pair(|w|)] =
$$
$$
X[\{i,j\}\in \Inv(|\pi_1|) \cap \Pair(|w|)] \cdot
X[\{|\pi_1(i)|,|\pi_1(j)|\}\in \Inv(|\pi_2|) \cap \Pair(|\pi_1 w \pi_1^t|)],
$$
which implies (\ref{e.sign-1-hom}) by taking a product over all possible pairs $\{i,j\}$.

\subsubsection{Part 2.}
For an arbitrary element $\pi\in G(r,n)$ let
$$
\St(\pi):=\{w\in I_{r,n}:\,\pi w\pi^t=w\} = \{w\in I_{r,n}:\ \pi w=w\bar \pi\}.
$$
We shall prove that $\rho$ is a model for $G(r,n)$ by showing that
\begin{equation}\label{e.sign_chi}
\sum\limits_{w\in \St(\pi)} \sign_o(\pi,w) =
\sum\limits_{\chi\in \Irr(G(r,n))} \chi(\pi)\qquad(\forall \pi\in G(r,n)).
\end{equation}
%
Let $\pi=(\sigma,z)\in G(r,n)$.
For each $d\ge 1$ let $\Supp_d(\pi)$ be the set of all $i\in [n]$
that belong to a cycle of length $d$ in $\sigma$,
and let $\sigma_d$, $z_d$ and $\pi_d = (\sigma_d,z_d)$ be
the restrictions to $\Supp_d(\pi)$ of $\sigma$, $z$ and $\pi$.
It is clear that if $w\in \St(\pi)$ and $\{i,j\}\in Pair(|w|)$
then, since $|w|$ and $\sigma$ commute,
$i$ and $j$ belong to cycles of the same length in $\sigma$.
Thus we can write (uniquely) $w = w_1 \cdots w_n$, where
each $w_d$ is supported within $\Supp_d(\pi)$.
It is also clear that
$$
w_d\in \St(\pi_d)\qquad(\forall d)
$$
and
$$
\sign_o(\pi,w) = \prod_{d=1}^{n} \sign_o(\pi_d,w_d),
$$
so that
$$
\sum_{w\in \St(\pi)} \sign_o(\pi,w) =
\prod_{d =1}^{n} \sum_{w_d\in \St(\pi_d)} \sign_o(\pi_d,w_d).
$$
On the other hand, by Theorem~\ref{t.main} and Corollary~\ref{t.root},
\begin{eqnarray*}
\sum_{\chi\in\Irr(G(r,n))} \chi(\pi)
&=& \#\{g\in G(r,n)\,|\,g \cdot \bar{g} = \pi\} \\
&=& \prod_{d=1}^{n} \#\{g_d\in G(r,n_d)\,|\,g_d \cdot \overline{g_d} = \pi_d\},
\end{eqnarray*}
where $n_d$ is the size of $\Supp_d(\pi)$.

These observations about the multiplicative property of both sides of~(\ref{e.sign_chi})
make it sufficient to prove~(\ref{e.sign_chi}) for $\pi$ with all cycles of the same length.
Indeed, let $\pi\in G(r,md)$ have $m$ cycles, each of length $d$,
ordered arbitrarily $\bc = (c_1,\ldots,c_m)$.
Again, by Theorem~\ref{t.main} and Corollary~\ref{t.root},
\begin{eqnarray*}
\sum_{\chi\in\Irr(G)} \chi(\pi)
&=& \sum_{P\in\Pi^{2(,1)}(\bc)} (dr)^{n_2(P)} r^{n_1(P)}\\
&=& \begin{cases}
\sum_{P\in\Pi^{2,1}(\bc)} (dr)^{n_2(P)} r^{n_1(P)},&\hbox{if $d$ is odd};\\
\sum_{P\in\Pi^{2}(\bc)} (dr)^{n_2(P)},&\hbox{if $d$ is even}.
\end{cases}
\end{eqnarray*}
Here $\Pi^{2(,1)}(\bc)$ is the set of all partitions $P$ of the set
$[m]$ into $n_2(P)$ pairs and $n_1(P)$ singletons such that
$z(c_i) + z(c_j) = 0$ for each pair $\{i,j\}$ in $P$ and $z(c_i) =
0$ for each singleton $\{i\}$ in $P$, and where we require $n_1(P)
= 0$ if $d$ is even.

It remains to show that for every $\pi\in G(r,md)$ of cycle type
$d^m$ as above,
\begin{eqnarray}\label{e.eq52}
\sum\limits_{w\in \St(\pi)} \sign_o(\pi,w) &=&
\sum_{P\in\Pi^{2(,1)}(\bc)} (dr)^{n_2(P)} r^{n_1(P)}.
\end{eqnarray}
Let $w\in \St(\pi)$. Then $|w|\in S_{md}$ is an involution. Choosing
$i_0\in [md]$ there are three cases to analyze:

\medskip

\noindent{\bf Case (1):} $|w(i_0)|=i_0$.

\noindent
Then there exists a cycle $c=(i_0,\dots,i_{d-1})$ of $|\pi|$
such that $|w(i_t)|=i_t$ for all $0\le t\le d-1$ and
$$ 
\pi w = w \bar{\pi} \iff 
z_w(i_t) + z_\pi(i_t) = -z_\pi(i_t) + z_w(i_{t+1}) \qquad(0\le t\le d-1),
$$ 
where $t+1$ is computed mod $d$.
Summing over $0\le t\le d-1$ gives
$$
2z_\pi(c) = 2\sum\limits_{t=0}^{d-1} z_\pi(i_t)=0
$$
(in $\ZZ_r$). Since $r$ is odd, this implies that
$$
z_\pi(c)=0.
$$
Given $\pi$, a choice of $z_w(i_0)\in \ZZ_r$ determines uniquely
$z_w(i_t)$ for all $0\le t\le d-1$.

\medskip

\noindent{\bf Case (2):}
$i_0$ and $|w(i_0)|$ are distinct and belong to different cycles of $|\pi|$.

\noindent
Then there are disjoint cycles $c_1=(i_0,\ldots,i_{d-1})$
and $c_2=(j_0,\ldots,j_{d-1})$ of $|\pi|$
such that $|w(i_t)|=j_t$ (and vice versa) for every $0\le t\le d-1$.
In this case
\begin{eqnarray*} 
\pi w = w \bar{\pi} &\Longleftrightarrow&
z_w(i_t) + z_\pi(j_t) = -z_\pi(i_t) + z_w(i_{t+1}) \qquad\text{ and }\\ & &
z_w(j_t) + z_\pi(i_t) = -z_\pi(j_t) + z_w(j_{t+1}) \qquad(0\le t\le d-1),
\end{eqnarray*} 
where $t+1$ is computed mod $d$.
Summing any of these over $0\le t\le d-1$ gives
$$
z_\pi(c_1) + z_\pi(c_2) = \sum\limits_{t=0}^{d-1} [z_\pi(i_t) + z_\pi(j_t)] =0.
$$
Given $\pi$, a choice of $z_w(i_0)\in \ZZ_r$ determines uniquely
$z_w(i_t)$ and $z_w(j_t)$ for all $0\le t\le d-1$
(since $z_w(j_t)=z_w(i_t)$ by the condition $w=w^t$).

\medskip

\noindent{\bf Case (3):}
$i_0$ and $|w(i_0)|$ are distinct but belong to the same cycle of $|\pi|$.

\noindent
This is possible only if $d=2e$ is even.
Then there is a cycle $c=(i_0,\dots,i_{2e-1})$ of $|\pi|$ such that
$|w(i_t)|=i_{t+e}$ for every $0\le t\le e-1$.
In this case
$$ 
\pi w = w \bar{\pi} \iff 
z_w(i_t) + z_\pi(i_{t+e}) = -z_\pi(i_t) + z_w(i_{t+1}) \qquad(0\le t\le 2e-1),
$$ 
where $t+e$ and $t+1$ are computed mod $d$.
Summing over $0\le t\le e-1$, remembering that $z_w(i_{t+e})=z_w(i_t)$
since $w=w^t$, yields
$$
z_\pi(c) = \sum\limits_{t=0}^{2e-1} z_\pi(i_t) = 0,
$$
and a choice of $z_w(i_0)\in \ZZ_r$ determines uniquely
$z_w(i_t)$ for all $0\le t\le 2e-1$.

\medskip

Summing up, each $w\in \St(\pi)$ defines a partition $P$ of the set of cycles of $\pi$
into pairs (Cases (2) and (3)) and singletons (Case (1)), with the same color restrictions
as in the definition of $\Pi^{2,1}(\bc)$; but, as we shall see, equality~(\ref{e.eq52})
holds on the level of a single partition $P$ only if both $r$ and $d$ are odd.
Further considerations will be needed in the other cases.

Approaching the actual computation of signs, let us make one further
simplification. Since both sides of~(\ref{e.eq52}) are class functions of $\pi$
(the left-hand-side being the trace of $\rho(\pi)$),
we may assume that
$$
|\pi| = (1,2,\ldots,d)(d+1,\ldots,2d) \cdots ((m-1)d+1,\ldots,md).
$$
Note that $\sign_o(\pi,w)$ depends only on $|\pi|$ and $|w|$.

\begin{observation}\label{t.sign_o}
Consider the above 3 cases.
\begin{itemize}
\item[(1)]
If
$|\pi|=(1,2,\ldots,d)$
and
$|w|=(1)(2)\cdots (d)$
then
$$
\sign_o(\pi,w)=1.
$$
\item[(2)]
If
$|\pi|=(1,2,\ldots,d)(d+1,\ldots,2d)$, $0\le i\le d-1$,
and
$|w|=(1,d+1)(2,d+2) \cdots (d,2d)$ (for $i=0$) or
$|w|=(1,d+1+i)(2,d+2+i) \cdots (d-i,2d)(d-i+1,d+1) \cdots (d,d+i)$
(for $i>0$) then
$$
\sign_o(\pi,w)=1.
$$
\item[(3)]
If $d=2e$ is even,
$|\pi|= (1,2,\ldots,2e)$
and
$|w|=(1,e+1)(2,e+2)\cdots (e,2e)$
then
$$
\sign_o(\pi,w)=-1.
$$
\end{itemize}
\end{observation}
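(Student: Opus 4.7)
The plan is to compute $\#(\Inv(|\pi|)\cap \Pair(|w|))$ directly in each case, since $\sign_o(\pi,w)=(-1)$ to this power. The one preliminary fact I would isolate first is that a cyclic permutation $c=(k+1,k+2,\ldots,k+d)$ has inversion set exactly $\{\{k+i,k+d\}:1\le i\le d-1\}$; every inversion must involve the wrap-around position $k+d$, because $c$ maps $k+i\mapsto k+i+1$ order-preservingly for $i<d$, and only the image $k+d\mapsto k+1$ creates pairs out of order.

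Case (1) is then immediate: $|w|$ is the identity, so $\Pair(|w|)=\emptyset$ and the sign is $+1$.

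For Case (2), I would observe that every pair listed in either form of $|w|$ is a \emph{cross-pair}, i.e.\ has one endpoint in $\{1,\ldots,d\}$ and the other in $\{d+1,\ldots,2d\}$. Since $|\pi|$ preserves each of these two blocks, any cross-pair $\{i,j\}$ with $i\le d<j$ satisfies $|\pi|(i)\le d<|\pi|(j)$; together with $i<j$ this forces $\{i,j\}\notin\Inv(|\pi|)$. Hence $\Inv(|\pi|)\cap \Pair(|w|)=\emptyset$ and the sign is $+1$. Notably, this block-preservation argument handles the $i=0$ and $i>0$ sub-cases uniformly, with no case split on the rotation parameter $i$.

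For Case (3), the preliminary fact yields $\Inv(|\pi|)=\{\{j,2e\}:1\le j\le 2e-1\}$. Among the $e$ pairs $\{t,t+e\}$ in $\Pair(|w|)$, only the one with $t=e$, namely $\{e,2e\}$, involves the position $2e$, and this pair does lie in $\Inv(|\pi|)$. The remaining pairs $\{t,t+e\}$ with $t<e$ have $|\pi|(t)=t+1$ and $|\pi|(t+e)=t+e+1$, preserving order. So the intersection has size exactly $1$ and the sign is $-1$. I do not anticipate any real obstacle: once the single-cycle inversion set is identified, the entire observation reduces to routine bookkeeping.
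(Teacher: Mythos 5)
Your computation is correct and is exactly the routine verification the paper leaves implicit (the Observation is stated without proof). Identifying $\Inv$ of a cycle $(k+1,\ldots,k+d)$ as the $d-1$ pairs $\{k+i,k+d\}$, and then noting that Case (2) pairs are block-crossing (hence never inversions of a block-preserving permutation) while exactly one Case (3) pair, $\{e,2e\}$, hits the wrap-around inversion, settles all three items.
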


If $d$ is odd then Case (3) does not occur. It then follows from
Observation~\ref{t.sign_o}(1)(2) that $\sign_o(\pi,w) = 1$ for all $w\in \St(\pi)$.
Thus 
$$
\sum_{w\in \St(\pi)} \sign_o(\pi,w)=
\# \St(\pi)=
\sum_{P\in\Pi^{2,1}(\bc)} (dr)^{n_2(P)} r^{n_1(P)},
$$
where the second equality follows from enumeration of all possible $w$ according to
Cases (1) and (2) above.

\medskip

Assume now that $d=2e$ is even. In this case $sign_o(\pi,w)$ may be negative, and
cancellations will occur.
Define a function $\varphi: \St(\pi) \to \St(\pi)$ as follows:
let $w\in \St(\pi)$. Each of the $m$ cycles of $|\pi|$ belongs, with respect to $|w|$,
to one of the Cases (1), (2) and (3).
If all the cycles belong to Case (2), define $\varphi(w) := w$.
Otherwise, let $c_i = ((i-1)d+1,\ldots,(i-1)d+d)$ be the first cycle of $|\pi|$ that
belongs to Cases (1) or (3). If it belongs to Case (1), i.e., if $|w(j)| = j$ for
all $(i-1)d+1\le j\le (i-1)d+d$, define $w' = \varphi(w)$ by
$$
z_{w'}(j) := z_w(j)\qquad(\forall j)
$$
and
$$
|w'(j)| :=
\begin{cases}
j+e,& \text{ if } (i-1)d+1\le j\le (i-1)d+e;\\
j-e,& \text{ if } (i-1)d+e+1\le j\le (i-1)d+2e;\\
|w(j)|,& \text{ otherwise}.
\end{cases}
$$
If the first cycle belongs to Case (3), i.e., if $|w(j)| = j\pm e$ for
all $(i-1)d+1\le j\le (i-1)d+d$, define $w' = \varphi(w)$ by
$$
z_{w'}(j) := z_w(j)\qquad(\forall j)
$$
and
$$
|w'(j)| :=
\begin{cases}
j,& \text{ if } (i-1)d+1\le j\le (i-1)d+d;\\
|w(j)|,& \text{ otherwise}.
\end{cases}
$$
Thus $\varphi$ toggles one of the cycles of $\pi$ between Cases (1) and (3).
It is easy to see that $\varphi$ is a ``sign-reversing involution'' on
$\St(\pi)$, i.e., an involution satisfying
$$
\sign_o(\pi,\varphi(w)) =
\begin{cases}
\sign_o(\pi,w), & \text{ if } \varphi(w) = w;\\
-\sign_o(\pi,w), & \text{ otherwise}
\end{cases}
\qquad(\forall w\in\St(\pi)).
$$
Thus the signs of elements $w\in \St(\pi)$ with $\varphi(w)\ne w$ cancel each other,
whereas $\sign_o(\pi,w) = 1$ when $\varphi(w) = w$.
It follows that, for even $d$,
$$
\sum_{w\in \St(\pi)} \sign_o(\pi,w)=
\#\{w\in \St(\pi)\,|\,\varphi(w) = w\}=
\sum_{P\in\Pi^{2}(\bc)} (dr)^{n_2(P)}.
$$
This completes the proof of Theorem~\ref{t.model} for odd $r$.

\subsection{The Case of Even $r$}

The proof is in general similar to the proof for odd $r$, described above.
We shall focus on the differences.

\subsubsection{Part 1}
Again, we first prove that $\rho$ extends to a group homomorphism.
Partition the set $\ZZ_r = [0,r-1]$ into two complementary ``arcs''
(or ``intervals'') $[0,r/2-1]$ and $[r/2,r-1]$.

\begin{defn}\label{d.sign_e}
For an element $\pi\in G(r,n)$ and an absolute involution $w \in I_{r,n}$ let
\begin{eqnarray*}
B(\pi,w) := \{i:\ & &|w(i)|=i,\ z_w(i) = 2k_w(i)+1 \text{ is odd with }\\
& & k_w(i)\in[0,r/2-1] \text{ and } k_w(i)+z_\pi(i)\in [r/2,r-1]\},
\end{eqnarray*}
and define
$$
\sign_e(\pi,w):= (-1)^{\# B(\pi,w)} \cdot (-1)^{\#(\Inv(|\pi|) \cap \Pair(|w|))}.
$$
The second factor is the same as in Definition~\ref{d.sign_o}.
\end{defn}

Define a map $\rho: G(r,n)\to GL(V_{r,n})$ by
$$ 
\rho(\pi)C_w := \sign_e(\pi,w) \cdot C_{\pi w\pi^t}
\qquad (\forall \pi\in G(r,n), w\in I_{r,n}).
$$ 

\medskip

Again, one can verify that this definition of $\rho$ coincides,
on the set $S$ of generators of $G(r,n)$,
with the previous definition~(\ref{e.linear_action}).
It thus suffices to show that $\rho$ is a group homomorphism,
namely that
$$ 
\sign_e(\pi_2\pi_1,w)=\sign_e(\pi_1,w)\cdot \sign_e(\pi_2,\pi_1 w{\pi_1^t}).
$$ 
By the proof of Part 1 for odd $r$, it
suffices to prove that
$$
(-1)^{\# B(\pi_2\pi_1,w)} = (-1)^{\#B(\pi_1,w)} \cdot (-1)^{\#B(\pi_2, \pi_1 w \pi_1^t)}.
$$
Indeed, letting again $X[\text{condition}]$ be $-1$ if the condition holds and $1$ otherwise,
it suffices to prove that for every $1\le i\le n$
\begin{equation}\label{e.X2}
X[i\in B(\pi_2\pi_1,w)] = X[i\in B(\pi_1,w)] \cdot X[|\pi_1(i)|\in B(\pi_2,\pi_1 w\pi_1^t)].
\end{equation}
Note that
$$
|w(i)|=i \iff |\pi_1 w\pi_1^t(\pi_1(i))|=|\pi_1(i)|
$$
and
$$
z_w(i) \text{ is odd } \iff z_{\pi_1 w\pi_1^t}(|\pi_1(i)|) \text{ is odd.}
$$
Hence, in order to prove (\ref{e.X2}), we can assume that $|w(i)|=i$ and that $z_w(i)$ is odd.
Denote $z_w(i)=2k+1$ with $k=k_w(i)\in[0,r/2-1]\subseteq \ZZ_r$, $m_1 := z_{\pi_1}(i)\in \ZZ_r$
and $m_2 := z_{\pi_2}(|\pi_1(i)|)\in\ZZ_r$.
Note that $z_{\pi_1 w \pi_1^t}(|\pi_1(i)|) =2(k+m_1)+1$.
Define
$$
k_1 :=
\begin{cases}
k+m_1, & \text{ if } k+m_1\in[0,r/2-1];\\
k+m_1-r/2, & \text{ otherwise},
\end{cases}
$$
where all operations are in $\ZZ_r$.
Equation~(\ref{e.X2}) now reduces to
$$
X[k+m_1+m_2\in [r/2,r-1]] = X[k+m_1\in [r/2,r-1]] \cdot X[k_1+m_2\in [r/2,r-1]].
$$
This is obviously true, completing the first part of the proof.

\subsubsection{Part 2.}
By the arguments used for odd $r$, in order to show that $\rho$ is a model for $G(r,n)$
it suffices to show that, for every $\pi\in G(r,md)$ with $|\pi|$ of cycle type $d^m$,
\begin{equation}\label{e.eq53}
\sum\limits_{w\in \St(\pi)} \sign_e(\pi,w) =
\sum_{P\in\Pi^{2(,1)}(w)} (dr)^{n_2(P)} r^{n_1(P)}.
\end{equation}
Again, for $w\in\St(\pi)$ and $i_0\in[md]$,
there are 3 cases to consider.

\medskip

\noindent{\bf Case (1):} $|w(i_0)|=i_0$.

\noindent
There exists a cycle $c=(i_0,\dots,i_{d-1})$ of $|\pi|$
such that $|w(i_t)|=i_t$ for all $0\le t\le d-1$, but the equation
$$
2z_\pi(c) = 2\sum\limits_{t=0}^{d-1} z_\pi(i_t)=0
$$
(in $\ZZ_r$) implies, for even $r$, only that
$$
z_\pi(c)\in \{0, r/2\}.
$$
The case $z_\pi(c) = r/2$ does not fit the color restrictions for $P\in\Pi^{2(,1)}(\bc)$,
if the cycle $c$ is taken as a singleton.

\medskip

\noindent{\bf Cases (2), (3):} $|w(i_0)|\ne i_0$.

\noindent
The analysis here is exactly the same as for odd $r$.

\medskip

Again, since both sides of~(\ref{e.eq53}) are class functions, we can choose $\pi$
to be any representative of its conjugacy class in $G(r,n)$. We shall require that
$$
|\pi| = (1,2,\ldots,d)(d+1,\ldots,2d) \cdots ((m-1)d+1,\ldots,md)
$$
and, moreover, that $z_\pi(i) = 0$ unless $i\equiv 1 \pmod d$; i.e., that $z_\pi$
of each cycle is concentrated in its smallest element.

The analogues of Observation~\ref{t.sign_o}(2)(3) are exactly as for odd $r$,
since $|w|$ has no fixed points in these cases, and therefore $\#B(\pi,w) = 0$.

Consider now the situation in Observation~\ref{t.sign_o}(1).
By the above assumptions on $\pi$,
the only possible member of $B(\pi,w)$ is $i=1$ (since $z_\pi(i)=0$ for $i\ne 1$).
Also, $z_\pi(1)$ is either $0$ or $r/2$.
If $z_\pi(1) = 0$ then $\#B(\pi,w) = 0$ for any $w\in\St(\pi)$.
If $z_\pi(1) = r/2$ then $\#B(\pi,w) = 1$ for any $w\in\St(\pi)$ with $z_w(1)$ odd,
but $\#B(\pi,w) = 0$ for any $w\in\St(\pi)$ with $z_w(1)$ even.
Since $z_w(1)$ uniquely determines $w$, we conclude that for
$|\pi|$ with a single cycle, $|w|$ of Case (1),
$$
\sum_{w\in \St(\pi)} \sign_e(\pi,w) =
\begin{cases}
r, & \text{ if } z_\pi(1) = 0;\\
0, & \text{ if } z_\pi(1) = r/2.\\
\end{cases}
$$
Thus, for $|\pi|$ of cycle type $d^m$, if $z_\pi(c) = r/2$ for some cycle $c$
then the total sign contribution of all $w\in\St(\pi)$ which are of Case (1)
on $c$ is zero, and this set of $w$-s can be discarded.
The rest of the proof is exactly as for odd $r$.

\qed

\section{Remarks and Questions}\label{s.final-section}

%

\subsection{RSK for wreath products}




If two absolute involutions are mapped to the same shape then their
corresponding uncolored involutions in $S_n$ are conjugate. It
results that all absolute involutions with fixed number of cycles
form an invariant submodule of $V_{r,n}$.

We conjecture that the RSK on absolute involutions is compatible
with the decomposition of the Gelfand model, namely

\begin{conjecture}
The submodule spanned by all absolute involutions with fixed
number of cycles is a multiplicity free sum of all irreducible
$\ZZ_r\wr S_n$ representations whose shape is obtained from these
absolute involutions via the colored RSK.
\end{conjecture}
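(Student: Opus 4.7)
The plan is to combine the multiplicity-freeness of the Gelfand model (Theorem~\ref{t.model}) with a cycle-refinement of the character identity underlying Theorem~\ref{t.main}. Write $V_{r,n}^{(k)}$ for the span of $\{C_w:w\in I_{r,n},\,|w|\text{ has }k\text{ cycles}\}$, and let $\Lambda'_k$ denote the set of $r$-partite shapes $\blambda$ that arise under the Stanton--White RSK from some $w$ in $I_{r,n}^{(k)}:=\{w\in I_{r,n}:|w|\text{ has }k\text{ cycles}\}$. The invariance of $V_{r,n}^{(k)}$ under $\rho$ is already noted before the conjecture, since $|\pi w\pi^t|=|\pi|\cdot|w|\cdot|\pi|^{-1}$ preserves cycle type.

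Because $\rho$ is a Gelfand model, every $\rho$-invariant subspace of $V_{r,n}$ is isomorphic to a direct sum of distinct irreducibles drawn from those appearing in $V_{r,n}$; hence $V_{r,n}^{(k)}=\bigoplus_{\blambda\in\Lambda_k}S^\blambda$ for a unique set $\Lambda_k$ of $r$-partite partitions of $n$, and the conjecture becomes the identity $\Lambda_k=\Lambda'_k$. Lemma~\ref{t.RS2} and Observation~\ref{t.matrices}(3) give a bijection between $I_{r,n}$ and the set of single $r$-partite SYT; restricted to $I_{r,n}^{(k)}$ this becomes a bijection with $\bigsqcup_{\blambda\in\Lambda'_k}\SYT(\blambda)$ and yields the global dimension identity
$$
\sum_{\blambda\in\Lambda_k}\dim S^\blambda=\dim V_{r,n}^{(k)}=\sum_{\blambda\in\Lambda'_k}\dim S^\blambda.
$$
This does not yet force $\Lambda_k=\Lambda'_k$ as sets.

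To complete the identification I would compute the character of $V_{r,n}^{(k)}$ and match it with $\sum_{\blambda\in\Lambda'_k}\chi^\blambda$. The former equals
$$
\chi_{V_{r,n}^{(k)}}(\pi)=\sum_{\substack{w\in\St(\pi)\\ |w|\text{ has }k\text{ cycles}}}\sign(\pi,w),
$$
a refinement of the sum analyzed in the proof of Theorem~\ref{t.model}. Cases (1)--(3) of that proof are already indexed by how $|w|$ permutes the cycles of $|\pi|$: a length-$d$ cycle of $|\pi|$ contributes $d$ fixed points to $|w|$ in Case (1), $d$ transpositions (shared with its partner cycle) in Case (2), and $d/2$ transpositions in Case (3). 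Consequently the number of cycles of $|w|$ is a linear statistic in the partition $P\in\Pi^{2(,1)}(\bc)$ together with the distribution of its singletons between Cases (1) and (3), and one can refine Corollary~\ref{t.root} and the sign cancellations at the end of Section~\ref{s.proof-of-model} to sum only over those patterns giving $k$ cycles in $|w|$.

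The main obstacle I anticipate lies on the opposite side: evaluating $\sum_{\blambda\in\Lambda'_k}\chi^\blambda(\pi)$ via Proposition~\ref{t.murnaghan_grn} requires an intrinsic characterization of $\Lambda'_k$ in terms of combinatorial invariants of $\blambda$. For $r=1$ this is the classical fact that the RSK shape of an involution has number of odd columns equal to its number of fixed points, and the corresponding decomposition of the model is well known. For general $r$ the Stanton--White correspondence builds $\blambda$ by splitting $w$ into color classes and running ordinary RSK on each, so one expects the analogous invariant to be a weighted count of odd columns across the components $\lambda^{(0)},\dots,\lambda^{(r-1)}$, subject to a color-sum condition reflecting the $\sigma$-invariance of the color vector of an absolute involution. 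Establishing this intrinsic description and then matching it to the refined character sum above is where the real work lies.
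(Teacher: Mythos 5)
The statement you are addressing is presented in the paper as a \emph{conjecture}: the authors give no proof, so there is no argument of theirs to compare yours against, and your writeup is in any case a plan rather than a proof --- you say so yourself. The reduction you do carry out is sound: since $\rho$ is a multiplicity-free model, every invariant subspace is a multiplicity-free sum of irreducibles, so $V_{r,n}^{(k)}=\bigoplus_{\blambda\in\Lambda_k}S^\blambda$ for a unique set $\Lambda_k$ and the conjecture is exactly the identity $\Lambda_k=\Lambda'_k$. The dimension count via Lemma~\ref{t.RS2} and Observation~\ref{t.matrices}(3) is also correct, and, as you rightly note, it cannot by itself rule out two shapes of equal dimension being swapped between two classes. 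The two steps you leave open --- an intrinsic characterization of $\Lambda'_k$ (the colored analogue of the odd-column/fixed-point statistic for $r=1$) and the matching refined character computation --- are precisely where the content of the conjecture lies, so the proposal does not close the gap.

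Beyond the gaps you name, there is a concrete obstruction to the refinement you propose for the trace side. The sign-reversing involution $\varphi$ at the end of Section~\ref{s.proof-of-model} toggles a cycle of $|\pi|$ of even length $d$ between Case (1), where it contributes $d$ fixed points to $|w|$, and Case (3), where it contributes $d/2$ transpositions; hence $\varphi$ changes the number of cycles of $|w|$ by $d/2$. The set $\{w\in\St(\pi)\,:\,|w|\ \text{has}\ k\ \text{cycles}\}$ is therefore not closed under $\varphi$, and the cancellation argument of the paper does not restrict to fixed $k$ as written. You would need either a new cancellation scheme that preserves the cycle count of $|w|$, or a direct evaluation of the refined signed sum, before the proposed character comparison could even be set up. As it stands the statement remains open, both in the paper and in your proposal.
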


\subsection{Construction of Models}

%
%
%

It should be noted that the construction for type $B$ may be adapted to $D_{2n+1}$.

\begin{question}
Give a construction of a Gelfand model for $D_{2n}$; for general
complex reflection groups; for affine Weyl groups.
\end{question}

Finally, a $q$-deformation of the model for $S_n=\ZZ_1\wr  S_n$,
which gives a model for the Iwahori Hecke algebra of type $A$, was
described in~\cite{APR}. A construction of  a model for the
Iwahori Hecke algebra of $\ZZ_r\wr S_n$ is desired.

\section{Acknowledgements}
The authors thank Eitan Sayag for helpful remarks and references.



\begin{thebibliography}{99}

\bibitem{APR}
R.\ M.\ Adin, A.\ Postnikov and Y.\ Roichman, {\em Combinatorial
Gelfand Models}, J. Algebra, to appear.

\bibitem{AA}
J.\ L.\ Aguado and J.\ O.\ Araujo,
{\em A Gelfand model for the symmetric group},
Communications in Algebra~{\bf 29} (2001), 1841--1851.

\bibitem{A}
J.\ O.\ Araujo,
{\em A Gelfand model for a Weyl group of type $B_n$},
Beitr\"age Algebra Geom.~{\bf 44} (2003), 359--373.

\bibitem{AB}
J.\ O.\ Araujo and J.\ J.\ Bige\'on,
{\em A Gelfand model for a Weyl group of type $D\sb n$ and the
branching rules $D\sb n\hookrightarrow B\sb n$},
J.\ Algebra~{\bf 294} (2005), 97--116.

\bibitem{B}
R.\ W.\ Baddeley,
{\em Models and involution models for wreath products and certain Weyl groups},
J.\ London Math.\ Soc.\ (2)~{\bf 44} (1991), 55--74.

\bibitem{BGG}
I.\ N.\ Bernstein, I.\ M.\ Gelfand and S.\ I.\ Gelfand,
{\em Models of representations of compact Lie groups} (Russian),
Funkcional.\ Anal.\ i Prilozen.~{\bf 9} (1975), 61--62.


\bibitem{BG}
D.\ Bump and D.\ Ginzburg, {\em Generalized Frobenius-Schur
numbers}, J.\ Algebra 278  (2004), 294--313.

\bibitem{FS}
G.\ Frobenius and I.\ Schur,
{\em \"Uber die reellen Darstellungen de rendlichen Gruppen},
S'ber.\ Akad.\ Wiss. Berlin (1906), 186--208.

\bibitem{G}
R.\ Gow,
{\em Real representations of the finite orthogonal and symplectic groups
of odd characteristic},
J.\ Algebra~{\bf 96} (1985), 249--274.

\bibitem{Saxl}
N.\ F.\ J.\ Inglis, R.\ W.\ Richardson and J.\ Saxl,
{\em An explicit model for the complex representations of $S\sb n$},
Arch.\ Math.\ (Basel)~{\bf 54} (1990), 258--259.

\bibitem{Is}
I.\ M.\ Isaacs,
{\em Character Theory of Finite Groups},
Dover, New York, 1994.

\bibitem{KM}
N.\ Kawanaka and H.\ Matsuyama,
{\em A twisted version of the Frobenius-Schur indicator and multiplicity-free
permutation representation},
Hokkaido Math.\ J.~{\bf 19} (1990), 495--508.

\bibitem{K1}
A.\ A.\ Klyachko,
{\em Models for complex representations of the groups ${\rm GL}(n,\,q)$\ and Weyl groups} (Russian),
Dokl.\ Akad.\ Nauk SSSR~{\bf 261} (1981), 275--278.

\bibitem{K2}
A.\ A.\ Klyachko,
{\em Models for complex representations of groups ${\rm GL}(n,\,q)$} (Russian),
Mat.\ Sb.\ (N.S.)~{\bf 120(162)} (1983), 371--386.

\bibitem{Verma}
V.\ Kodiyalam and D.-N.\ Verma, {\em A natural representation
model for symmetric groups}, preprint, 2004.


\bibitem{Ro1}
Y.\ Roichman,
{\em A recursive rule for Kazhdan-Lusztig characters},
Adv.\ in Math.~{\bf 129} (1997), 24--45.

\bibitem{R}
P.\ D.\ Ryan,
{\em Representations of Weyl groups of type $B$ induced from centralisers of involutions},
Bull.\ Austral.\ Math.\ Soc.~{\bf 44} (1991), 337--344.



\bibitem{SW}
D.\ Stanton and D.\ E.\ White,
{\em A Schensted algorithm for rim hook tableaux},
J.\ Combin.\ Theory A~{\bf 40} (1985), 211--247.

\end{thebibliography}
\end{document}